\newtheorem{theorem}{Theorem}
\newtheorem{definition}[theorem]{Definition}
\newtheorem{example}[theorem]{Example}
\newtheorem{remark}[theorem]{Remark}
\newtheorem{corollary}[theorem]{Corollary}
\newtheorem{proposition}[theorem]{Proposition}
\newtheorem{lemma}[theorem]{Lemma}
\def\d{{\rm dom}\hphantom{.}}
\def\2{{\bf 2}}
\def\P2{{\rm Par}(\2)}
\def\OA{{\rm Op}(\k)}
\def\O2{{\rm Op}(\2)}
\def\Pn2{{\rm Par}^{(n)}(2)}
\def\PA{{\rm Par}(\k)}
\def\pp{{\rm pPol}\,}
\def\PT{{\rm Pol}\,}
\def\N{\mathds{N}}
\def\st{~|~}
\def\iff{\Longleftrightarrow}
\def\vv{\lower 4pt \hbox{$\buildrel
{\textstyle{v}}\over{\scriptstyle{\sim}}$}}
\def\uu{\lower 4pt \hbox{$\buildrel
{\textstyle{u}}\over{\scriptstyle{\sim}}$}}
\def\ww{\lower 4pt \hbox{$\buildrel
{\textstyle{w}}\over{\scriptstyle{\sim}}$}}
\def\r{\rho}
\def\F{{\mathcal F}}
\def\3{{\bf 3}}
\def\k{{\bf k}}
\def\JA{{\rm J}(\k)}
\def\J2{{\rm J}(\2)}
\def\N{{\Bbb N}}
\def\f{\phi}
\def\dom{{\rm dom \;}}
\def\powerset{{\mathcal P}}
\DeclareMathOperator{\id}{id}
\def\PAl{{\Omega_{<\ell}(\k)}}
\def\uu{{\bf u}}
\def\vv{{\bf v}}
\def\ww{{\bf w}}
\def\0{{\bf 0}}
\def\1{{\bf 1}}
\def\image{{\rm img \;}}
\begin{document}
\title{Hereditarily Rigid Relations \\ {\large  Dedicated to Professor I.G.
Rosenberg on the occasion of his 80-th birthday.}}

\author{
% \IEEEauthorblockN{\hspace{3cm}}
\and
\IEEEauthorblockN{\hspace{1cm}Miguel Couceiro}%
%%\thanks{Lucien Haddad is supported by RMC Academic Research Programme}
\IEEEauthorblockA{\hspace{1cm}LORIA \\
\hspace{1cm}(CNRS - Inria Nancy G.E. -Univ. Lorraine)\\
\hspace{1cm}Vand{\oe}uvre-l\`es-Nancy,  France\\
\hspace{1cm}Email: miguel.couceiro@inria.fr}
\and
\IEEEauthorblockN{\hspace{3cm}}
\and
\IEEEauthorblockN{Lucien Haddad}%
\IEEEauthorblockA{Dept. of Mathematics \& CS\\
  Royal Military College of Canada\\
Kingston, Ontario, Canada\\  Email: haddad-l@rmc.ca}      \and
        \IEEEauthorblockN{\hspace{2cm}}
                         \and
          %%\IEEEauthorblockN{\hspace{2cm}}
            %%\and
        %% \IEEEauthorblockN{\hspace{2cm}}
            %%       \and
        \IEEEauthorblockN{\hspace{2cm}}
       \and
                 \IEEEauthorblockN{Maurice Pouzet \hspace{1cm}}%
\IEEEauthorblockA{ICJ, Univ. Claude-Bernard Lyon 1  \hspace{1cm}\\
 Villeurbanne, France  \hspace{1cm}\\
 and Dept. of Math \& Stat,  \hspace{1cm}
 \\Univ. Calgary, Alberta, Canada  \hspace{1cm}\\
  Email: pouzet@univ-lyon1.fr  \hspace{1cm}}
 \and
\IEEEauthorblockN{\hspace{2cm}}
\and
      \IEEEauthorblockN{Karsten Schölzel \hspace{2cm}}%
\IEEEauthorblockA{University of Luxembourg  \hspace{2cm}\\
Mathematics Research Unit  \hspace{2cm}\\
Grand Duchy of Luxembourg \hspace{2cm}\\
Email: karsten.schoelzel@uni.lu \hspace{2cm}}}
  \maketitle

\begin{abstract} An $h$-ary relation $\r$ on a finite set $A$ is said to be \emph{hereditarily rigid} if the  unary partial functions on $A$ that preserve $\r$ are the subfunctions of the identity map or of constant maps. A family of relations ${\mathcal  F}$ is said to be \emph{hereditarily  strongly rigid} if the partial functions on $A$ that preserve  every $\r \in {\mathcal  F}$ are the subfunctions of projections or constant functions.  In this paper we show that hereditarily rigid relations exist and we give a lower bound on their arities. We also prove that no finite hereditarily  strongly rigid  families of relations exist and we also construct an infinite   hereditarily  strongly rigid  family of relations.

\end{abstract}

\baselineskip=13.5pt

\section{Introduction}

Let $k \ge 2$ and  $\k:=\{0,\dots,k-1\}$.   For a positive integer
$n$, an  $n$-ary {\it partial function}  on $\k$ is a map $f:
\dom (f) \to \k$ where $\dom (f)$ is a subset of $\k^n$, called the
{\it domain} of $f$. Let $\PA^{(n)}$ denote the set of all $n$-ary
partial functions on $\k$ and let ${\PA} := \bigcup\limits_{n\ge 1}
\PA^{(n)}$. Set $\OA^{(n)} := \left\{f \in \PA^{(n)} \st
\dom (f) = \k^n \right\}$ and call $ \OA := \bigcup\limits_{n\ge 1}
\OA^{(n)}$ the set of all  ({\it total}) {\it functions} on $\k$.

A partial function $f \in \PA^{(n)}$ is
a \emph{subfunction} of $g \in \PA^{(n)}$ (in symbols $f \le g$ ) if $\dom(f)
\subseteq \dom(g)$ and $f(\vec{x})=g(\vec{x})$ for all $\vec{x} \in \dom(f)$.
A partial function $f$ is \emph{constant} if it does not have two distinct values.
\\
For every positive integer $n$ and every $1 \le i \le n$, let
$e_{i}^n$ denote the $n$-ary \emph{ $i$-th  projection} defined
by
 $e^n_{i} (\vec{a}) = a_i$ for all
$\vec{a}:=(a_1, \dots, a_n) \in \k^n$. Set $\JA=\left \{e^n_{i} \st
n \in \N,   1
\leq i \leq n \right \}$, i.e., $\JA$ is the set of all projection functions on $\k$. We denote by $\id$ the identity map on $\bf k$ (instead of $e_1^{1}$). 
Any subfunction of a projection is called a \emph{partial projection}.
\\
As usual,  if $f$ is any function we denote by $\dom (f)$ its domain and by $\image (f)$ its range. If $f$ and $g$ are two functions and $\image (g)\subseteq \dom (f)$ we set $f\circ g$ for the composition defined in a natural way.
A {\it partial clone} on $\k$ is a composition closed subset
of $\PA$ containing the set of all projections.  A partial clone $C$  contained in the
set $\OA$ of all total functions is called {\it a clone} on $\k$. Moreover, a partial clone $C$ is called {\it strong} if it contains all subfunctions of its functions, i.e., if for all functions $f$
and $g$, if  $f \in C$ and $ g \le f$, then $g \in C$.\\
Let $m\ge 1$, an  $m$-ary relation on $\k$ is a subset $\r$ of
$\k^m$.  Let $n\geq 1$ and let $f\in \PA^{(n)}$. We say that
$f~preserves~ \r$, or $\r$ {\it is invariant under} $f$,
if for every $m \times n$ matrix $M=[M_{ij}]$ whose columns $M_{*j}
\in \r,~(j=1,\ldots n)$ and whose rows $M_{i*} \in \dom (f)$
$(i=1,\ldots,m)$,  we have $(f(M_{1*}),\ldots,f(M_{m*})) \in
\r$.
\\
\noindent  Set $\pp \r:=\{f \in \PA \st f~{\rm
preserves}~\r\}$ and \break $\PT \r:= (\pp \r )\cap
\OA$.

Moreover  let
$\pp  ^{(1)}\rho  := (\pp \rho) \cap \PA ^{(1)}$ denote all partial unary functions that preserve $\r$. Similarly let
$\PT^{(1)}  \rho  := (\PT \rho) \cap \OA ^{(1)}$ denotes all (total)  unary functions that preserve $\r$.

\begin{remark} It follows from the definition of $\pp \r$ that if there is no matrix $M$ whose lines and columns  satisfy the two
conditions above, then $f \in \pp\r$.
\end{remark}

\begin{remark}
Given a non-empty $h$-ary relation $\r$ on $\bf k$ then, 
provided that $k\geq 2$, there is always a partial constant unary function $f$ that is not a partial projection function and that preserves $\r$. 
Indeed if for some $a \in \k$, $(a,\dots,a) \notin \r$, then choose $b \ne a$ and define the partial constant function $f_{b}$ by $\dom(f_b)=\{a\}$ and $f_b(a)=b$. 
Then $f_b \in \pp  ^{(1)} \rho$  by definition. On the other hand, if $(a,\dots,a) \in \r $ for all $a \in \k$, then again the above defined partial constant function 
$f_b$ satisfies $f_b \in \pp ^{(1)}  \rho$.
\end{remark}

It is well known (see, e.g.,  \cite{lau2006}  Sections 2.6 and 20.3) and easy
to show that $\pp\r$ (resp. $\PT \r$) is a strong partial clone
on $\k$ (resp. a clone on $\k$) called the partial clone (resp. the clone) {\it
determined by $\r$}.

\begin{remark} 
In what follows, we deal only with {\it non-empty} relations.  
\end{remark}

Rigid binary relations are introduced in \cite{v-p-h}. An $h$-ary relation $\r$ on $\k$ is said to be {\em{rigid}} if $\PT ^{(1)}\r= \{\id\}$, i.e., if the identity map is the only unary function on $\k$ that preserves $\r$. Moreover $\r$ is {\em strongly rigid} if $\PT \r = \JA$, i.e., if the set of all (total) functions on $\k$ that preserve $\r$ consists of all projection functions on $\k$.
Rigid and strongly rigid relations  have been studied in the literature (see, e.g.
\cite{anne,lan-pos, benoit-claude,ivo, v-p-h}). 

When dealing with partial functions, it seems  natural to extend this concept. However, as mentioned in Remark 2,  rigid relations  with respect to partial functions are trivial. 
In fact, since $\pp\emptyset= \PA $, for  $k\geq 2$, such relations do not exist at all. 
Hence, we focus on the notion of semirigidity. 

Recall that a relation $\rho$ is \emph{semirigid} (resp. \emph{strongly semirigid}) if every unary function (resp. arbitrary function) that preserves $\rho$ is the identity map or a constant map 
(resp. a projection or a constant function) (see, e.g. \cite{miyakawa2, lan-pos, miyakawa1,zadori}). 
We call these generalizations \emph{hereditarily rigid} and \emph{hereditarily strongly rigid} (rather than hereditarily semirigid  and hereditarily strongly semirigid).
For ${\ell} \geq 1$, we define 
\begin{eqnarray*}
\PAl &:=& \{ f \in \PA ^{(1)} \mid f \leq \id \}  \\
 &\cup& \{ g \in \PA ^{(1)} \mid |\image (g)| < {\ell} \}. 
\end{eqnarray*}

\begin{definition}
  {\rm Let $h \ge 1$, $\rho$ be an $h$-ary relation on $\k$ and $\ell$ be an integer with $1 \leq {\ell} \leq k$.
  Then $\rho$ is called \emph{hereditarily $\ell$-rigid} if
  $\pp ^{(1)} \rho = \PAl$. }
    \end{definition}

For simplicity,  we have consider this condition instead $\pp ^{(1)}\rho\subseteq \PAl$ which would seem the natural generalization of semirigidity. 
We leave to a further study, the consideration of this generalization and we refer to Lemma \ref {arity} for a possible  relationship between the two notions.

By  Remark 2,   we know that  for any relation $\r$ on  at least two elements,  there is always  a partial constant function that preserves $\r$. In particular:

\begin{proposition}\label{norigid}
 Let $k \ge 2$. Then there is no hereditarily $1$-rigid relation on $\k$.
\end{proposition}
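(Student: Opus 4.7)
My approach is to reduce the claim directly to Remark 2. The first step is to observe that $\Omega_{<1}(\k)$ collapses to the set of partial subfunctions of the identity: the second set in the defining union, $\{g \in \PA^{(1)} \mid |\image(g)| < 1\}$, consists only of the empty partial function, since $|\image(g)| = 0$ forces $\dom(g) = \emptyset$, and the empty function is itself below $\id$. Hence $\Omega_{<1}(\k) = \{f \in \PA^{(1)} \mid f \leq \id\}$, i.e., the set of all partial projections of arity $1$.

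With this reduction in hand, the second step is to exhibit some $f \in \pp^{(1)}\rho$ with $f \not\leq \id$. This is exactly what Remark 2 produces: for any $a \in \k$ and any $b \in \k \setminus \{a\}$ (which exists since $k \geq 2$), the partial constant $f_b$ with $\dom(f_b) = \{a\}$ and $f_b(a) = b$ preserves every non-empty $\rho$, whether or not $(a,\dots,a)\in\rho$, yet is not a subfunction of $\id$ since $b \neq a$.

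Combining the two steps yields $f_b \in \pp^{(1)}\rho \setminus \Omega_{<1}(\k)$, so $\pp^{(1)}\rho \neq \Omega_{<1}(\k)$ and $\rho$ fails to be hereditarily $1$-rigid. I do not foresee any real obstacle: the proof amounts to noticing the degeneration of $\Omega_{<1}(\k)$ at $\ell = 1$ and then invoking Remark 2.
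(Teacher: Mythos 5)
Your proof is correct and follows essentially the paper's own route: the paper deduces the proposition directly from Remark 2, and your preliminary observation that $\Omega_{<1}(\k)$ degenerates to the set of subfunctions of the identity (since $|\image(g)|<1$ forces $g$ to be the empty function, which is below $\id$) is precisely what makes that remark applicable. One caveat: your paraphrase of Remark 2 overstates it --- it is not true that $f_b$ preserves every non-empty $\rho$ for \emph{any} choice of $a$ and $b\neq a$ (if $(a,\dots,a)\in\rho$ but $(b,\dots,b)\notin\rho$, then $f_b$ maps the column $(a,\dots,a)$ to $(b,\dots,b)\notin\rho$); the remark only guarantees that \emph{some} suitable pair $(a,b)$ exists, chosen according to whether some diagonal tuple is missing from $\rho$, and that existence is all your argument actually needs.
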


 For $\ell  =2$, we will drop $\ell$ in the definition of $\ell$-rigidity.

%This fact will be shown again in Corollary \ref{norigid}.

We define {\it hereditarily   strongly rigid relations} as follows.

  \begin{definition}
   {\rm A family of relations ${\mathcal R}$ on  $\k$ is said to be {\em hereditarily strongly rigid} if
$\displaystyle \bigcap_{\r \in {\mathcal R}} \pp(\r)$ is the partial clone generated by all partial constant functions on $\k$}.
 \end{definition}

In particular an $h$-ary relation $\r$  on $\k$ is {\em{hereditarily  strongly rigid}} if $\pp \rho$ is the  partial clone on $\k$ generated by all constant functions on $\k$. Equivalently a partial function preserves $\r$ iff it is a partial projection or a partial constant function.

Since a hereditarily $\ell$-rigid relation ($\ell\geq 2)$ (resp.  a  hereditarily strongly rigid relation)  is preserved by the partial constant functions and is non-empty, then it  must contain the diagonal relation $\{(x,\dots,x) \st x \in \k\}$.

\section{Hereditarily rigid relations}

In this section we show that non-trivial hereditarily rigid relations exist and we give an  upper bound on the size of their  domain.

Let $X$ be a set   and $m\geq 1$. We denote by $\powerset (X)$ the powerset of $X$. We denote by ${X \choose m}$ the subset of $\powerset (X)$ made of $m$-element subsets of $X$.
We recall that an \emph{antichain} of subsets of $X$ is a collection of subsets such that none is contained in another.
We recall the famous theorem of Sperner (see \cite{engel}):

\begin{theorem}
  Let $n$ be a non-negative integer.  The largest size  of an  antichain family of subsets of  an $n$-element set  $X$ is $\binom{n}{\lfloor n/2 \rfloor}$. It is only realized by
 $\binom{X}{\lfloor n/2 \rfloor}$ and $\binom{X}{\lceil n/2 \rceil}$.
\end{theorem}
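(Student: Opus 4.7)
The plan is to use Lubell's chain-counting argument. A \emph{maximal chain} in $\powerset(X)$ is a sequence $\emptyset = C_0 \subsetneq C_1 \subsetneq \cdots \subsetneq C_n = X$ with $|C_i|=i$; these are in bijection with the linear orderings of $X$, so there are $n!$ of them, and a fixed $A\subseteq X$ of size $k$ lies on exactly $k!(n-k)!$ of them (one for each interleaving of linear orders on $A$ and on $X\setminus A$). If $\mathcal{A}\subseteq\powerset(X)$ is an antichain, then no chain meets $\mathcal{A}$ twice, because any two sets on one chain are comparable. Double counting chain--element incidences therefore yields
\begin{equation*}
\sum_{A\in\mathcal{A}}|A|!\,(n-|A|)!\;\le\;n!,
\end{equation*}
i.e., the LYM inequality $\sum_{A\in\mathcal{A}}\binom{n}{|A|}^{-1}\le 1$. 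Since $\binom{n}{k}$ attains its maximum at $k=\lfloor n/2\rfloor$, this immediately gives $|\mathcal{A}|\le\binom{n}{\lfloor n/2\rfloor}$.

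For the uniqueness clause I would examine the case of equality. Tightness in LYM forces every maximal chain to meet $\mathcal{A}$ in exactly one element, and also $\binom{n}{|A|}=\binom{n}{\lfloor n/2\rfloor}$ for every $A\in\mathcal{A}$, whence $|A|\in\{\lfloor n/2\rfloor,\lceil n/2\rceil\}$. When $n$ is even these two values coincide and $\mathcal{A}=\binom{X}{n/2}$ is immediate. For $n=2m+1$ odd, given two $m$-element sets $A,A'$ with $|A\cap A'|=m-1$ and $B:=A\cup A'$, I would consider a maximal chain passing through $A$ and $B$ and another passing through $A'$ and $B$; each meets $\mathcal{A}$ exactly once, so a short case analysis on whether $B\in\mathcal{A}$ shows that $A\in\mathcal{A}$ if and only if $A'\in\mathcal{A}$. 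The adjacency ``$|A\cap A'|=m-1$'' on $\binom{X}{m}$ is connected (it is the Johnson graph), so either $\binom{X}{m}\subseteq\mathcal{A}$ or $\mathcal{A}\cap\binom{X}{m}=\emptyset$. In the first case the antichain condition forbids every $(m+1)$-set in $\mathcal{A}$, forcing $\mathcal{A}=\binom{X}{m}$; in the second, the same argument applied to $(m+1)$-sets yields $\mathcal{A}=\binom{X}{m+1}$.

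The chain count and the LYM inequality are entirely routine; the only delicate step I expect is the uniqueness statement in the odd case. There one must rule out mixed antichains that use both middle layers, and LYM alone is not enough --- the propagation through the Johnson-type adjacency is what converts a local two-chain obstruction into a global homogeneity of $\mathcal{A}$ on one fixed layer.
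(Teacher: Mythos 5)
This statement is Sperner's theorem, and the paper does not prove it: it is quoted as a known result with a citation to Engel's monograph, so there is no in-paper argument to compare yours against. Your proposal is the standard Lubell chain-counting proof of the LYM inequality together with the classical equality analysis, and it is correct. The bound itself is routine, as you say. The uniqueness clause is the only delicate part and you handle it properly: tightness of $\sum_{A\in\mathcal{A}}\binom{n}{|A|}^{-1}\le 1$ against $|\mathcal{A}|=\binom{n}{\lfloor n/2\rfloor}$ forces both that every $A\in\mathcal{A}$ lies in a middle layer and that every maximal chain meets $\mathcal{A}$ exactly once; in the odd case $n=2m+1$ your two chains through $A,B$ and through $A',B$ with $B=A\cup A'$ each contain exactly one candidate of each of the sizes $m$ and $m+1$, so exactly one of $A,B$ and exactly one of $A',B$ lies in $\mathcal{A}$, giving $A\in\mathcal{A}\iff A'\in\mathcal{A}$; connectedness of the Johnson-type adjacency on $\binom{X}{m}$ then makes $\mathcal{A}$ homogeneous on that layer, and the antichain condition (or a cardinality count) finishes both subcases. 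The even case is immediate as you note. This is a complete and self-contained proof of a statement the paper treats as a black box, which is a reasonable thing to supply; the only cosmetic remark is that the connectivity of the Johnson graph, while standard, deserves a one-line justification (any two $m$-sets are joined by successively swapping one element at a time).
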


Let $n\geq 1$.  We set $[n]:=\{1, \dots n\}$; we identify $n$-tuples of elements of $X$ with maps  from $[n]$  to $X$.  Let $\vec{x}:=(x_1, \dots, x_n)\in X^n$; considering it as a map,  $\image(\vec{x})= \{x_1, \dots, x_{n}\}$; we set $\vert \vec{x} \vert := \vert \image(\vec{x})\vert$.
Suppose  $1\leq m\leq n$. We set $\beta_{m}^{n}(X):= \{\vec{x}\in X^n: \vert \vec{x}\vert =m\}$ and  $\beta_{<m}^{n}(X):=\bigcup_{m'<m}\beta_{m'}^{n}(X)$.
We note that $\beta^{n}_{n}(X)$ is the set of all $n$-tuples with pairwise distinct entries, hence if $\ell:=\vert X\vert$, then $\vert \beta^{n}_{n}(X)\vert$ is the falling factorial $\ell^{\underline n}:= \ell\cdot (\ell-1)\cdots (\ell-n+1)$. On the other hand, $\beta^{n}_{\ell}(X)$ identifies with the set  of surjective maps from $[n]$ onto $X$. Denote this number by $s(n,\ell)$ and recall that it satisfies the formula:

\[ s(n,\ell) = \sum_{j=1}^{\ell} (-1)^{\ell-j} \binom{\ell}{j} j^n. \]

\begin{lemma}\label{arity} Let $\r$ be an $h$-ary relation on $\k$, $\ell\geq 1$, $Orb_{<\ell}(\rho):= \{x\circ i: i \in \beta_{<\ell}^{h}(\k)\cap \rho \; \text{and}\;  x\in \PA ^{(1)}\}$,  and $\hat{\rho}:= \rho \cup Orb_{<\ell}(\rho)$.  Then
\begin{enumerate}
\item $ \PAl\cup \pp^{(1)} \rho\subseteq \pp^{(1)}\hat {\rho}$;
\item If $\rho$ is hereditarily $\ell$-rigid, then $\hat{\rho}= \rho$;
\item If  $\hat \rho$ is hereditarily $\ell$-rigid, then  $\pp^{(1)} \rho\subseteq  \PAl$;
\item If $h<\ell$ and $\PAl \subseteq \pp^{(1)} \rho$, then $\pp^{(1)} \rho=\PA ^{(1)}$.
\end{enumerate}
\end{lemma}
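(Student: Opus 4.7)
The plan is to handle the four claims in turn, relying on one recurring device: whenever a composition $g\circ i$ yields an output tuple with fewer than $\ell$ distinct entries, I replace $g$ by its restriction $g|_{\image(i)}$, whose image has size at most $|g\circ i|<\ell$ and therefore lies in $\PAl$.

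For claim (1) I verify the two inclusions separately. The inclusion $\pp^{(1)}\rho\subseteq\pp^{(1)}\hat\rho$ is a case split on $t\in\hat\rho$: either $t\in\rho$ and $f\circ t\in\rho\subseteq\hat\rho$ directly, or $t=x\circ i\in Orb_{<\ell}(\rho)$ and then $f\circ t=(f\circ x)\circ i$ remains of the prescribed form with inner witness $i\in\beta^h_{<\ell}(\k)\cap\rho$. The inclusion $\PAl\subseteq\pp^{(1)}\hat\rho$ splits into the trivial partial-subprojection case ($f\circ t=t$) and the small-image case $|\image(f)|<\ell$; in the latter, the estimate $|f\circ t|\leq|\image(f)|<\ell$ combined with the structure of $t$ (either in $\rho$ or already of the form $x\circ i$) places $f\circ t$ in $Orb_{<\ell}(\rho)$.

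For claim (2) the restriction device is applied directly: given $x\circ i\in Orb_{<\ell}(\rho)$, the restriction $g:=x|_{\image(i)}$ lies in $\PAl=\pp^{(1)}\rho$, hence $x\circ i=g\circ i\in\rho$ and $\hat\rho=\rho$. Claim (3) is immediate from (1): $\pp^{(1)}\rho\subseteq\pp^{(1)}\hat\rho=\PAl$. For claim (4), given any $f\in\PA^{(1)}$ and $i\in\rho$ with $\image(i)\subseteq\dom(f)$, the restriction $g:=f|_{\image(i)}$ has $|\image(g)|\leq|i|\leq h<\ell$, so $g\in\PAl\subseteq\pp^{(1)}\rho$ and $f\circ i=g\circ i\in\rho$; hence $\pp^{(1)}\rho=\PA^{(1)}$.

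The main subtlety is in the $\PAl\subseteq\pp^{(1)}\hat\rho$ half of (1): one must correctly exhibit the small-image output $f\circ t$ as a member of $Orb_{<\ell}(\rho)$ in the sense of its definition, which requires choosing an inner tuple with few distinct entries. Once the defining formula for $Orb_{<\ell}(\rho)$ is parsed this way (or, equivalently, after applying the restriction device to produce the witness), the remaining case analysis is routine.
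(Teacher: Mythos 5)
Your route coincides with the paper's: part (1) rests on the observation that $Orb_{<\ell}(\rho)$ is closed under left-composition with arbitrary partial unary maps, part (2) on restricting the outer function $x$ to $\image(i)$, part (3) is deduced from (1), and part (4) on restricting $f$ to the image of a tuple of $\rho$, which has at most $h<\ell$ elements. Parts (2), (3), (4) and the half $\pp^{(1)}\rho\subseteq\pp^{(1)}\hat\rho$ of (1) are correct as you wrote them.

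The gap sits exactly where you place ``the main subtlety'', and it is not closed by re-parsing the definition or by your restriction device. In the half $\PAl\subseteq\pp^{(1)}\hat\rho$ of (1), small-image case, your split on $t\in\hat\rho$ (``either in $\rho$ or already of the form $x\circ i$'') omits the sub-case $t\in\rho$ with $\vert\image(t)\vert\geq\ell$. There $f\circ t$ does have fewer than $\ell$ distinct entries, but membership in $Orb_{<\ell}(\rho)$ requires writing $f\circ t=x\circ i$ with $i\in\beta_{<\ell}^{h}(\k)\cap\rho$, and no such inner witness need exist: the restriction device shrinks the \emph{outer} function, whereas it is the \emph{inner} tuple that must lie in $\rho$ and have fewer than $\ell$ distinct values. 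Concretely, take $\rho=\{(0,1)\}$ on $\2$ and $\ell=2$: then $\beta^{2}_{<2}(\2)\cap\rho=\emptyset$, so $\hat\rho=\rho$, yet the constant map with value $0$ on $\{0,1\}$ belongs to $\Omega_{<2}(\2)$ and sends $(0,1)$ to $(0,0)\notin\hat\rho$. So this sub-case cannot be argued away under the definition as printed. Everything you wrote (and the paper's sketch) does go through verbatim if one takes $Orb_{<\ell}(\rho)=\{x\circ i : i\in\rho,\ x\in\PA^{(1)},\ \vert\image(x\circ i)\vert<\ell\}$, i.e., closes $\rho$ under all partial unary maps whose restriction to the tuple has image of size less than $\ell$; in particular your argument for (2) still applies to that larger set. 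You should either adopt that reading explicitly or supply the missing sub-case; as it stands the claimed witness does not exist.
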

 \begin{proof} For (1),  observe first that  $f\circ x\circ i \in Orb_{<\ell}(\rho)$ for every $f\in \PA ^{(1)}$,  $x\circ i \in Orb_{<\ell}(\rho)$ such that $\image (f) \subseteq x\circ i$; next  prove successively that  $\PAl \subseteq \pp^{(1)}\hat {\rho}$ and  $\pp^{(1)} \rho\subseteq \pp^{(1)}\hat {\rho}$.     (2) is   immediate. (3) follows from (1).  For (4) observe that a  partial function $f$ belongs to $\pp^{(1)} \rho$  if and only if every subfunction with domain of size at most $h$ belongs to $\pp^{(1)} \rho$.
 \end{proof}

 %From (4) above we have:
 \begin{corollary}\label{arity}
 Let $k\ge 2$ and $\ell$, $1\leq \ell\leq k$. If  an $h$-ary relation $\rho$ on $\k$ is hereditarily $\ell$-rigid, then $h\geq \ell$.
 \end{corollary}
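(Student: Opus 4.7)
The plan is to apply part~(4) of the preceding Lemma contrapositively. Assume, toward a contradiction, that $\rho$ is hereditarily $\ell$-rigid with $h<\ell$. By the definition of hereditary $\ell$-rigidity, $\pp^{(1)}\rho=\PAl$, and in particular $\PAl\subseteq \pp^{(1)}\rho$. Combined with $h<\ell$, clause~(4) of that Lemma then yields $\pp^{(1)}\rho=\PA^{(1)}$. Hence the corollary reduces to showing that this forced equality is impossible, i.e., that $\PAl$ is a proper subset of $\PA^{(1)}$.

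Exhibiting a witness is straightforward when $\ell\geq 2$ (the case $\ell=1$ is either trivial, since every relation has arity $h\geq 1$, or handled by Proposition~\ref{norigid}). Since $k\geq \ell\geq 2$, there is a non-identity permutation $\pi$ of $\k$, for instance the transposition swapping $0$ and $1$ viewed as a total unary function. Its image is all of $\k$, of size $k\geq \ell$, so $\pi$ violates the clause $|\image(g)|<\ell$ in the definition of $\PAl$; and $\pi(0)=1\neq 0$ prevents $\pi$ from being a subfunction of $\id$. Hence $\pi\in \PA^{(1)}\setminus \PAl$, contradicting $\pp^{(1)}\rho=\PAl$.

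I do not foresee a real obstacle: the substantive content has already been packaged in part~(4) of the preceding Lemma, which records that a partial unary function preserves $\rho$ iff all of its restrictions to subsets of size $\leq h$ do, and when $h<\ell$ such restrictions automatically have image of size $<\ell$ and thus lie in $\PAl\subseteq \pp^{(1)}\rho$. The corollary then amounts to this fact together with the easy observation that non-identity permutations lie outside $\PAl$ whenever $\ell\leq k$.
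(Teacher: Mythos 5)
Your proof is correct and follows exactly the route the paper intends: the corollary is stated as an immediate consequence of part~(4) of the preceding Lemma, and you supply the contrapositive argument together with the (easy but necessary) witness showing $\PAl\subsetneq\PA^{(1)}$ for $2\le\ell\le k$. The handling of the $\ell=1$ case via Proposition~\ref{norigid} or triviality is also fine.
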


 %We say a little bit more. 
 Let $\Psi_{\ell}(\k)$ be the set of one-to-one partial unary functions on   $\bf k$ which are not below the identity and whose domain has size $\ell$. 
 This definition amounts to:
 $$\Psi_{\ell}(\k)= \{ f \in \PA ^{(1)} \mid |\dom (f)| = {\ell}\; \text{and} \;  f \notin \PAl \}.$$

 \begin{lemma}
 Let $\rho$ be a  hereditarily $\ell$-rigid relation and $f \in \Psi_{\ell}(\k)$.
Then there is some $\vec{x} \in \rho$ with
  $\image(\vec{x}) = \dom (f)$.
\end{lemma}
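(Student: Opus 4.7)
The plan is to argue by contradiction. Suppose no $\vec{x}\in\r$ satisfies $\image(\vec{x})=\dom f$. I will show that $f$ then preserves $\r$, which by hereditary $\ell$-rigidity forces $f\in\pp^{(1)}\r=\PAl$, contradicting the assumption that $f\in\Psi_{\ell}(\k)$.

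To verify that $f$ preserves $\r$, I first unwind the definition of preservation. Since $f$ is unary and $\r$ is $h$-ary, the matrices $M$ appearing in the definition are $h\times 1$ columns, so $f\in\pp^{(1)}\r$ amounts to the following: whenever a tuple $\vec{x}=(x_1,\dots,x_h)\in\r$ has all its entries in $\dom f$ (equivalently $\image(\vec{x})\subseteq\dom f$), the tuple $(f(x_1),\dots,f(x_h))$ also lies in $\r$. So I fix such a $\vec{x}\in\r$ with $\image(\vec{x})\subseteq\dom f$. By the contradiction hypothesis, $\image(\vec{x})\neq\dom f$, hence $\image(\vec{x})\subsetneq\dom f$ and $|\image(\vec{x})|<\ell$.

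The next step is to apply hereditary $\ell$-rigidity not to $f$ itself but to a suitable restriction. Let $g$ be the restriction of $f$ to $\image(\vec{x})$. As a subfunction of $f$, the map $g$ is one-to-one, and $|\image(g)|=|\dom g|=|\image(\vec{x})|<\ell$, so $g\in\PAl$. Hereditary $\ell$-rigidity then gives $g\in\pp^{(1)}\r$, and applying this preservation to $\vec{x}$ (whose entries all lie in $\dom g$) yields $(f(x_1),\dots,f(x_h))=(g(x_1),\dots,g(x_h))\in\r$, as needed. I do not anticipate a real obstacle: the argument is essentially a direct unpacking of definitions, the key leverage being that hereditary $\ell$-rigidity supplies enough small-image partial functions in $\pp^{(1)}\r$ that we can restrict $f$ to $\image(\vec{x})$ and transfer preservation back up to $f$ itself.
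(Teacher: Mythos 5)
Your proof is correct and is essentially the contrapositive of the paper's argument: the paper starts from $f\notin\pp^{(1)}\rho$, picks a witness tuple $\vec{x}$, and shows the restriction of $f$ to $\image(\vec{x})$ would lie in $\PAl$ (hence preserve $\rho$) unless $\image(\vec{x})=\dom(f)$, which is exactly the restriction-and-transfer step you use. Same key idea, same use of both inclusions of $\pp^{(1)}\rho=\PAl$, just packaged as a proof by contradiction.
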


\begin{proof}  Clearly,  $f\not \in \pp^{(1)}\rho$. Hence, there is  
$$\vec{x}:= (x_1,\dots, x_h)\in \rho\cap (\dom(f))^h$$ with $f(\vec{x})\not \in \rho$. Set $X:=\image(\vec{x})$. 
Then $g:= f_{\restriction X}\not \in  \pp^{(1)}\rho$. Since $\pp^{(1)}\rho= \PAl $, $\vert \image(g)\vert=\ell$ and thus $X=\dom(f)$.
\end{proof}

\medskip
The next lemma shows that we only need to consider functions whose domain has size $\ell$.

\begin{lemma}\label{lem:key}
  Let $f \in \PA ^{(1)}$ and ${\ell} \geq 1$ such that $g \in \PAl$ for all $g \leq f$ with $|\dom (g)| = {\ell}$.
  Then $f \in \PAl$.
\end{lemma}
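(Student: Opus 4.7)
My plan is to argue by contrapositive: assuming $f \notin \PAl$, I will construct a subfunction $g \leq f$ with $|\dom (g)| = \ell$ which itself fails to lie in $\PAl$, contradicting the hypothesis. Unpacking the definition of $\PAl$, the assumption $f \notin \PAl$ means simultaneously that $f \not\leq \id$ (so some $a \in \dom (f)$ satisfies $f(a) \neq a$) and that $|\image (f)| \geq \ell$. Since any $g$ with $|\dom (g)| = \ell$ failing to lie in $\PAl$ must satisfy $|\image (g)| \geq \ell$ (forcing $g$ to be injective with image of size exactly $\ell$) together with having some point where it disagrees with $\id$, the goal is to exhibit a subfunction having precisely these features.

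The construction I would use begins by fixing a witness $a \in \dom (f)$ of $f(a) \neq a$. Since $|\image (f)| \geq \ell$ and $f(a) \in \image (f)$, I would select $\ell$ pairwise distinct values $c_1, \dots, c_\ell \in \image (f)$ with $c_1 := f(a)$. For each $i \geq 2$ I would then choose a preimage $d_i \in f^{-1}(c_i)$, and set $d_1 := a$. Pairwise distinctness of the $c_i$'s then forces pairwise distinctness of the $d_i$'s: indeed $d_1 \neq d_i$ for $i \geq 2$ because $f(d_1) = c_1 \neq c_i = f(d_i)$, and similarly for indices $i, j \geq 2$.

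Setting $g := f_{\restriction \{d_1, \dots, d_\ell\}}$ will then do the job. By construction $|\dom (g)| = \ell$, while $\image (g) = \{c_1, \dots, c_\ell\}$ has size $\ell$, so the condition $|\image (g)| < \ell$ fails. Moreover $g(d_1) = f(a) \neq a = d_1$, so $g \not\leq \id$. Both disjuncts in the definition of $\PAl$ thus fail, giving $g \notin \PAl$ and contradicting the hypothesis on $f$. Therefore $f \in \PAl$.

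I do not anticipate any genuine obstacle. The only step requiring a small amount of care is forcing the non-identity witness $a$ to appear inside the chosen subdomain of size exactly $\ell$; this is handled at the outset by declaring $c_1 := f(a)$ and $d_1 := a$ instead of choosing the $c_i$'s and $d_i$'s arbitrarily among the $\ell$ distinct image values of $f$.
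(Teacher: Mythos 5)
Your proof is correct. The logic is sound at every step: from $f \notin \PAl$ you get both $f \not\leq \id$ (yielding a witness $a$ with $f(a)\neq a$) and $|\image(f)| \geq \ell$; picking $\ell$ distinct image values $c_1 = f(a), c_2,\dots,c_\ell$ and one preimage of each (with $d_1 = a$) produces $\ell$ pairwise distinct domain points, since distinctness of the values forces distinctness of the preimages; the restriction $g$ then has domain of size exactly $\ell$, image of size exactly $\ell$, and moves $a$, so it violates both clauses of the definition of $\PAl$ and contradicts the hypothesis. Note also that the degenerate situations are handled automatically: $|\image(f)|\geq\ell$ forces $|\dom(f)|\geq\ell$, so the selection is always possible.

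Your route is genuinely different from, and cleaner than, the paper's. The paper also argues by contradiction and also starts from a set $X\subseteq\dom(f)$ with $|X|=|f(X)|=\ell$, but it chooses $X$ \emph{arbitrarily}, deduces from the hypothesis that $f$ must fix $X$ pointwise, and only then hunts for a point $y\notin X$ with $f(y)\neq y$ to swap into $X$; this forces a two-case analysis according to whether $f(y)\in X$ or $f(y)\notin X\cup\{y\}$, with a separate exchange argument in each case. Your single observation --- build the injective $\ell$-element subdomain \emph{around} the non-identity witness $a$ by anchoring $c_1:=f(a)$ and $d_1:=a$ --- makes the bad subfunction appear in one step and eliminates the case split entirely. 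The paper's version does make explicit the preliminary reductions for $|\dom(f)|<\ell$ and $|\dom(f)|=\ell$, which in your write-up are absorbed (correctly, but implicitly) into the observation that $f\notin\PAl$ already guarantees enough room for the construction; you might state that one-line reduction explicitly for completeness.
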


\begin{proof} Let $f \in \PA ^{(1)}$. If  $\vert \dom(f)\vert <\ell$, then $\vert \image(f)\vert <\ell$ hence $f\in \Omega_{<\ell}(\bf k)$. On the other hand if $\vert \dom(f)\vert =\ell$, then, since $f\leq f$,  $f\in \Omega_{<\ell}(\bf k)$. So let  $|\dom (f)| > \ell$.  Assume to the contrary that $f \notin \PAl$, i.e. $f \not\leq \id$ and $|\image (f)| \geq \ell$. Then there is some $X \subseteq \dom (f)$ with $|X| = {\ell}$ and $|f(X)| = {\ell}$.
 By hypothesis, $g:=f_{\restriction X}\in \Omega_{\ell}(\bf k)$. Since $\vert \image(g)\vert =\ell$, we have $g\leq \id $, i.e., $f(x)=x$ for all $x\in X$.

  (1) Suppose that there exists $y \in \dom (f) \setminus X$ such that   $f(y) \in X$. Then consider $Y := X \setminus \{ f(y) \} \cup \{y\}$. Let $g := f_{|Y}$.
      Clearly, $g\leq f$ and $|\dom (g)| = |Y| = \vert X\vert ={\ell}$, hence  from the hypothesis of the lemma   we have $g \in \PAl$. Since  $\vert  \image (g) \vert =  |X| = {\ell}$ we have $g \leq \id$, i.e.,
      $f(y) = y$, but this contradicts $y \notin X$ and $f(y) \in X$.

   (2)   Otherwise, we have  $f(y) \notin X \cup \{y\}$ for all $y \in \dom (f) \setminus X$. Since $f\not \leq \id$ there exists some $y \in \dom (f)\setminus X$ such that $f(y)\not \in X\cup \{y\}$. Then consider $Y := (X \setminus \{ x \}) \cup \{y\}$ for some $x \in X$ and let $g:= f_{|Y}$.
      Clearly, $g\leq f$ and $|\dom (g)| = |Y| = \vert X\vert ={\ell}$ thus,  again,   $g \leq \id$, hence       $f(y) = y$, but this contradicts $f(y) \notin X \cup \{y\}$.

In both cases, we get a contradiction and, hence, $f \in \PAl$ as claimed.
\end{proof}

\begin{theorem}\label{cor:psi}
A relation $\rho$ is hereditarily $\ell$-rigid iff
  $ \pp^{(1)}\r \cap \Psi_{\ell}(\k) = \emptyset$ and $\PAl \subseteq  \pp^{(1)}\r$.
\end{theorem}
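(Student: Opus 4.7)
The plan is to treat the two directions of the equivalence separately, using the key lemma (the one just proved) together with the fact that $\pp\rho$ is a strong partial clone (so it is closed under taking subfunctions).

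For the forward direction, assume $\rho$ is hereditarily $\ell$-rigid, i.e., $\pp^{(1)}\rho = \PAl$. Then $\PAl \subseteq \pp^{(1)}\rho$ is immediate. Moreover, by the very definition of $\Psi_{\ell}(\k)$, its elements are explicitly not in $\PAl$, hence $\Psi_{\ell}(\k) \cap \PAl = \emptyset$ and therefore $\pp^{(1)}\rho \cap \Psi_{\ell}(\k) = \emptyset$.

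For the backward direction, assume $\pp^{(1)}\rho \cap \Psi_{\ell}(\k) = \emptyset$ and $\PAl \subseteq \pp^{(1)}\rho$. We need the reverse inclusion $\pp^{(1)}\rho \subseteq \PAl$. Take $f \in \pp^{(1)}\rho$ and appeal to Lemma \ref{lem:key}: it suffices to check that every $g \leq f$ with $|\dom(g)| = \ell$ lies in $\PAl$. Since $\pp\rho$ is a strong partial clone, $g \leq f$ implies $g \in \pp\rho$, and being unary, $g \in \pp^{(1)}\rho$. Now if such a $g$ were \emph{not} in $\PAl$, then $g \not\leq \id$ and $|\image(g)| \geq \ell$; combined with $|\dom(g)| = \ell$, this forces $|\image(g)| = \ell$, so $g$ is one-to-one. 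Consequently $g \in \Psi_{\ell}(\k)$, which gives $g \in \pp^{(1)}\rho \cap \Psi_{\ell}(\k)$, contradicting the hypothesis. Hence $g \in \PAl$, and Lemma \ref{lem:key} yields $f \in \PAl$, completing the proof.

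There is no real obstacle here; the work has been absorbed into Lemma \ref{lem:key}, which reduces the question to unary partial functions of domain size exactly $\ell$, precisely where $\Psi_{\ell}(\k)$ captures all the potential offenders. The only point that must be stated explicitly is the strength of $\pp\rho$, used to pass from $f \in \pp^{(1)}\rho$ to each subfunction $g \leq f$ also being in $\pp^{(1)}\rho$.
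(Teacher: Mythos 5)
Your proof is correct and follows essentially the same route as the paper's: the forward direction is immediate from the definitions, and the backward direction reduces to unary subfunctions of domain size $\ell$ via Lemma \ref{lem:key}, using that $\pp\rho$ is a strong partial clone. The only cosmetic difference is that you argue directly (verifying the hypothesis of Lemma \ref{lem:key} and explicitly checking that an offending $g$ would be one-to-one, hence in $\Psi_{\ell}(\k)$), whereas the paper runs the same argument contrapositively; the content is identical.
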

\begin{proof} Trivially, if $\rho$ is hereditarily $\ell$-rigid, then the two conditions are satisfied since $\pp^{(1)}\r=\PAl$. Conversely, let $f\in  \pp^{(1)}\r$. If $f\not \in \PAl$, then,  according to Lemma \ref{lem:key}, there is some $g\le f$  with $|\dom (g)| = {\ell}$ such that $g \not \in \PAl$. We have  $g\in \Psi_{\ell}(\k)$ hence, according to the first condition,  $g \not \in \pp^{(1)}\r$.  This  is impossible since every subfunction of $f$ must  belong  to  $\pp^{(1)}\r
$. This proves that $\pp^{(1)}\r\subseteq \PAl$. Since the second condition asserts that  $\PAl \subseteq  \pp^{(1)}\r$, we have $\pp^{(1)}\r=\PAl$, that is,  $\rho$ is hereditarily $\ell$-rigid. \end{proof}

%Line 107 We now give a lower bound ...same remark as line 11 and line 79
%
%Why not to state Theorem 18 in the introduction or put a phrase like:  Every  hereditarily rigid $h-ary relation has size at most $r(h)$ where $r(h)$ is the largest integer $k$ such that $k(k-1)\leq {{2^h-2} \choose {2^{h-1}-1}}. This bound is best possible.
%

% LEMMA IS WRITTEN WITH $\ell\geq 2$, How you can deduce that from it?
%\begin{corollary}
%  Let $\rho$ be an $\ell$-rigid relation.
%  Then $(x_1,\dots,x_h) \in \rho$ whenever $|\{x_1,\dots,x_h\}| < l$.
%\end{corollary}

It follows from the statements above that we only need to consider the tuples with exactly $\ell$ different entries in a hereditarily  $\ell$-rigid relation $\rho$.
We connect $\ell$-rigid relations to antichains of subsets.

\begin{definition}
  Let $\rho$ be an $h$-ary relation on $\k$, and ${\ell} \geq 1$. We define the function  $T_{\rho}^{\ell}$ from $\beta_{\ell}^{\ell}(\bf k)$ to $\powerset (\beta_{\ell}^{h}([\ell]))$  by setting:
 \begin{eqnarray*}
  \rho^{\ell}((x_1, \dots, x_{\ell})) &=&\\
  \{ (i_1, \dots, i_h)&\in& \beta_{\ell}^h([\ell]) \mid (x_{i_1},\dots,x_{i_h}) \in \rho \}.
 \end{eqnarray*}

 \end{definition}

     Identifying $t$-tuples  with functions, and denoting by $\circ$ the composition of functions, the definition above rewrites as:  $$T_\rho^{\ell}(\vec{x}) := \{ \vec{i} \in \beta_{\ell}^h ([\ell])\mid \vec{x}\circ \vec{i}\in \rho \}.$$

In Sections 2 and 4 we will use the following two definitions and Proposition \ref{proposition:compability} below.

\begin{definition}
  Let $T\colon  \beta_{\ell}^{\ell}(\k)\rightarrow  {\powerset}({ \beta_{\ell}^h([\ell]) )}$. We define the function $\hat{T}$ from $\beta_{\ell}^{\ell}(\k)$to ${\powerset}({ \beta_{\ell}^h(\k)) }$ by setting:
    $$\hat{T}(\vec{x}) := \{ \vec{x}\circ \vec{i}\mid \vec{i} \in T(\vec{x}) \}.$$
\end{definition}

\begin{definition}\label{def:12}
  Let $T\colon  \beta_{\ell}^{\ell}(\k)\rightarrow {\powerset}({ \beta_{\ell}^h([\ell])) }$. We define the $h$-ary relation $\rho_T$ on $\bf k$ by setting:
  \[ \rho_T :=   \beta^{h}_{<\ell}(\k) \cup \bigcup_{\vec{x} \in \beta_{\ell}^{\ell}(\k)} \hat{T}(\vec{x}). \]
\end{definition}
\begin{proposition} \label{proposition:compability}
  Let $\rho$ be an $h$-ary relation on $\k$, $\pi$ be a permutation on $[\ell]$  and $\vec{x}  \in \beta_{\ell}^{\ell}(\k)$.
  Then
  \[\vec{i} \in T^{\ell}_{\rho}(\vec{x}\circ \pi ) \; \text{if and only if}\;   \pi\circ \vec{i} \in T^{\ell}_\rho(\vec{x}). \]
\end{proposition}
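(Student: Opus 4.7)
The plan is to reduce the equivalence to the associativity of function composition once $h$- and $\ell$-tuples are read as maps $[h]\to \k$ and $[\ell]\to \k$. Under this convention, $T_{\rho}^{\ell}(\vec{y})$ is by definition $\{\vec{j}\in \beta_{\ell}^{h}([\ell]) : \vec{y}\circ \vec{j}\in \rho\}$, so both membership statements in the proposition unfold directly into the condition ``some triple composite lies in $\rho$''.

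Before carrying out that unfolding I would record the two well-posedness checks needed so that the displayed equivalence makes sense. First, $\vec{x}\circ \pi\in \beta_{\ell}^{\ell}(\k)$, because $\vec{x}$ is an injection $[\ell]\to \k$ with image of size $\ell$ and $\pi$ is a bijection of $[\ell]$, so the composite is again injective with image of size $\ell$. Hence $T_{\rho}^{\ell}(\vec{x}\circ \pi)$ is defined. Second, for $\vec{i}\in \beta_{\ell}^{h}([\ell])$ one has $\image(\pi\circ \vec{i})=\pi(\image(\vec{i}))$, which still has cardinality $\ell$ since $\pi$ is a bijection; hence $\pi\circ \vec{i}\in \beta_{\ell}^{h}([\ell])$ and the membership $\pi\circ \vec{i}\in T_{\rho}^{\ell}(\vec{x})$ is also meaningful.

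The remaining step is a one-line chain of equivalences using associativity:
\[ \vec{i}\in T_{\rho}^{\ell}(\vec{x}\circ \pi)\iff (\vec{x}\circ \pi)\circ \vec{i}\in \rho\iff \vec{x}\circ(\pi\circ \vec{i})\in \rho\iff \pi\circ \vec{i}\in T_{\rho}^{\ell}(\vec{x}), \]
which is exactly the asserted biconditional. There is no genuine obstacle here: the content of the proposition is essentially the compatibility of the right action of the symmetric group of $[\ell]$ on $\beta_{\ell}^{\ell}(\k)$ with its left action on $\beta_{\ell}^{h}([\ell])$ through the map $T_{\rho}^{\ell}$, and this compatibility is built into the definition once tuples are identified with functions.
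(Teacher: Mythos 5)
Your proof is correct and is essentially the paper's own argument: unfold the definition of $T_{\rho}^{\ell}$ and invoke associativity of composition, so that both memberships reduce to $\vec{x}\circ \pi\circ \vec{i}\in \rho$. The added well-posedness checks are a reasonable bonus but do not change the route.
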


\begin{proof} The fact that   $\vec{i}\in T_{\rho}^{\ell}(\vec{x}\circ \pi)$ amounts to  $(\vec{x}\circ \pi)\circ \vec{i}\in \rho$, whereas 
$\pi\circ \vec{i} \in T^{\ell}_\rho(\vec{x})$   means that $\vec{x}\circ (\pi\circ \vec{i})\in \rho$. \end{proof}

Members of  $\beta_{\ell}^{\ell}(\k)$ are one-to-one maps hence, as partial functions they are invertible on their image. If  $\vec{x} \in \beta_{\ell}^{\ell}(\k)$ we denote by $\vec{x}^{-1}$ its inverse.
\begin{definition}
  Let $\vec{x},\vec{y} \in \beta_{\ell}^{\ell}(\k)$. We set  $F_{\vec{x}\to\vec{y}}:= \vec{y}\circ \vec{x}^{-1}$.  \end{definition}
 According to this definition  $F_{\vec{x}\to\vec{y}}$ is  a partial unary function on $\bf k$ with domain $\image(\vec{x})$ and image $\image(\vec{y})$.

  \begin{lemma}\label{lem:key}
Let $\rho$ be an $h$-ary relation on $\k$, $\ell\geq 2$  and  $\vec{x},\vec{y} \in \beta_{\ell}^{\ell}(\k)$.
Then $F_{\vec{x}\to\vec{y}} \in \pp^{(1)} \rho$ implies $T^{\ell}_\rho(\vec{x}) \subseteq T^{\ell}_\rho(\vec{y})$. The converse holds provided that  $\Psi_m(\k) \subseteq  \pp^{(1)}\r$ for all $m < \ell$.

\end{lemma}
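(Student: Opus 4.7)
The plan is to read both implications directly from the definitions, leveraging that $F:=F_{\vec{x}\to\vec{y}}=\vec{y}\circ \vec{x}^{-1}$ is a bijection from $\image(\vec{x})$ onto $\image(\vec{y})$, since both $\vec{x}$ and $\vec{y}$ are one-to-one as members of $\beta_\ell^\ell(\k)$. For the forward direction I would start from $\vec{i}\in T_\rho^\ell(\vec{x})$, i.e.\ $\vec{x}\circ\vec{i}\in\rho$, observe that each entry of $\vec{x}\circ\vec{i}$ lies in $\image(\vec{x})=\dom(F)$, apply $F$ componentwise using $F\in \pp^{(1)}\rho$, and collapse $F\circ\vec{x}\circ\vec{i}$ to $\vec{y}\circ\vec{i}$ by associativity. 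Preservation then yields $\vec{i}\in T_\rho^\ell(\vec{y})$.

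For the converse the task is to show that $F$ preserves $\rho$ under the additional hypothesis $\Psi_m(\k)\subseteq\pp^{(1)}\rho$ for every $m<\ell$. I would take an arbitrary $\vec{a}\in\rho$ with all entries in $\dom(F)=\image(\vec{x})$ and split on $m:=|\vec{a}|$. If $m=\ell$, then $\image(\vec{a})=\image(\vec{x})$, so $\vec{i}:=\vec{x}^{-1}\circ\vec{a}\in\beta_\ell^h([\ell])$ satisfies $\vec{x}\circ\vec{i}=\vec{a}\in\rho$, placing $\vec{i}$ in $T_\rho^\ell(\vec{x})\subseteq T_\rho^\ell(\vec{y})$; hence $F(\vec{a})=\vec{y}\circ\vec{i}\in\rho$. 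If $m<\ell$, I restrict $F$ to $S:=\image(\vec{a})$ to obtain a one-to-one partial function $g$ with $|\dom(g)|=m$, and argue by the dichotomy $g\leq\id$ vs.\ $g\not\leq\id$: in the first case $F$ fixes every entry of $\vec{a}$ and $F(\vec{a})=\vec{a}\in\rho$; in the second case $g\in\Psi_m(\k)\subseteq\pp^{(1)}\rho$ and componentwise application gives $F(\vec{a})=g(\vec{a})\in\rho$.

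The only delicate point is the low-support regime $m<\ell$: the table $T_\rho^\ell$ records only tuples with $\ell$ distinct entries, so the inclusion $T_\rho^\ell(\vec{x})\subseteq T_\rho^\ell(\vec{y})$ cannot by itself force preservation on tuples $\vec{a}$ that do not span $\image(\vec{x})$. The hypothesis $\Psi_m(\k)\subseteq\pp^{(1)}\rho$ for $m<\ell$ is precisely tailored to cover every one-to-one restriction of $F$ to a subset of size $m<\ell$, thereby closing the gap between the $T^{\ell}$-level condition and genuine preservation.
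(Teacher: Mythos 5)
Your proposal is correct and follows essentially the same route as the paper's proof: the forward direction by composing $F_{\vec{x}\to\vec{y}}\circ\vec{x}\circ\vec{i}=\vec{y}\circ\vec{i}$, and the converse by splitting a tuple $\vec{u}\in\rho\cap\image(\vec{x})^h$ according to whether $|\image(\vec{u})|=\ell$ or $<\ell$. Your explicit dichotomy in the low-support case (restriction below the identity versus restriction in $\Psi_m(\k)$) is a small refinement the paper leaves implicit, but it is the same argument.
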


\begin{proof} By definition, we have $F_{\vec{x}\to\vec{y}}\in \pp^{(1)} \rho$ iff $F_{\vec{x}\to \vec{y}}\circ \vec{u}\in \rho$ for every $\vec{u}\in \rho\cap \image(\vec {x})^h$.
Suppose that $F_{\vec{x}\to\vec{y}} \in \pp^{(1)} \rho$. We show that $T_{\rho}^{\ell}(\vec{x})\subseteq T_{\rho}^{\ell}(\vec{y})$ holds. Indeed, let $\vec{i}\in T_{\rho}^{\ell}(\vec{x})$. Then $\vec{x}\circ \vec{i}\in \rho$ thus $F_{\vec{x}\to \vec{y}}\circ \vec{x}\circ \vec{i}=\vec{y}\circ{i} \in \rho$, proving that $\vec{i}\in T_{\rho}^{\ell}(\vec{y})$.

Conversely, suppose that  $T_{\rho}^{\ell}(\vec{x})\subseteq T_{\rho}^{\ell}(\vec{y})$ holds. Let $\vec{u}\in \rho\cap \image(\vec {x})^h$. Set $m:=\vert \image(\vec {u})\vert $. If $m<\ell$, then since  $\Psi _m(\bf k)\subseteq \pp\rho^{(1)}$ it follows that $F_{\vec{x}\to \vec{y}}\circ \vec{u}\in \rho$.
If $m=\ell$ set $\vec{i}:= \vec{x}^{-1}\circ \vec{u}$. Then $\vec{x}\circ \vec{i}= \vec{u}\in \rho$ hence  $\vec{i}\in T_{\rho}^{\ell}(\vec{x})$. Since $T_{\rho}^{\ell}(\vec{x})\subseteq T_{\rho}^{\ell}(\vec{y})$, then $\vec{i}\in T_{\rho}^{\ell}(\vec{y})$ hence $F_{\vec{x}\to\vec{y}}\circ \vec{u}=\vec{y}\circ{i} \in \rho$.  And thus $F_{\vec{x}\to\vec{y}}\in \pp \rho^{(1)}$.
\end{proof}

\medskip
From this,  we obtain:
\begin{theorem} \label{corollary:rigidisantichain}
  If a  relation $\rho$ on $\k$ is hereditarily $\ell$-rigid, then  ${\mathcal  T} := \{ T^{\ell}_\rho(\vec{x}) \mid \vec{x} \in \beta_{\ell}^{\ell}(\k) \}$ is an antichain with respect to set inclusion. The converse holds provided that $\PAl\subseteq \pp^{(1)} \rho$.
\end{theorem}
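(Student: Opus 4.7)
The plan is to apply Lemma~\ref{lem:key} in both directions, together with Theorem~\ref{cor:psi} for the converse; in each case the argument reduces to showing that $F_{\vec{x}\to\vec{y}}\leq\id$ forces $\vec{x}=\vec{y}$ for $\vec{x},\vec{y}\in\beta_\ell^\ell(\k)$.

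For the forward direction, I would assume $\rho$ is hereditarily $\ell$-rigid, so $\pp^{(1)}\rho=\PAl$, and fix $\vec{x},\vec{y}\in\beta_\ell^\ell(\k)$ with $T^{\ell}_\rho(\vec{x})\subseteq T^{\ell}_\rho(\vec{y})$. The hypothesis of the converse part of Lemma~\ref{lem:key} is (vacuously) met since any one-to-one partial function with domain of size $m<\ell$ has image of size $m<\ell$ and hence lies in $\PAl=\pp^{(1)}\rho$; therefore $F_{\vec{x}\to\vec{y}}\in\PAl$. As $\image(F_{\vec{x}\to\vec{y}})=\image(\vec{y})$ has size $\ell$, the second clause of $\PAl$ is excluded, leaving $F_{\vec{x}\to\vec{y}}\leq\id$. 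Reading off $\vec{y}(i)=F_{\vec{x}\to\vec{y}}(\vec{x}(i))=\vec{x}(i)$ for every $i\in[\ell]$ gives $\vec{x}=\vec{y}$ and thus $T^{\ell}_\rho(\vec{x})=T^{\ell}_\rho(\vec{y})$, confirming the antichain property.

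For the converse, I would assume $\PAl\subseteq\pp^{(1)}\rho$ and that $\mathcal{T}$ is an antichain, and invoke Theorem~\ref{cor:psi} to reduce the goal to $\pp^{(1)}\rho\cap\Psi_\ell(\k)=\emptyset$. Suppose there were some $f\in\pp^{(1)}\rho\cap\Psi_\ell(\k)$: then $f$ is one-to-one with $|\dom(f)|=\ell$ and $f\not\leq\id$. Picking an enumeration $\vec{x}\in\beta_\ell^\ell(\k)$ of $\dom(f)$ and setting $\vec{y}:=f\circ\vec{x}\in\beta_\ell^\ell(\k)$, I would get $F_{\vec{x}\to\vec{y}}=f$ and, since $f\not\leq\id$, $\vec{x}\neq\vec{y}$. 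Lemma~\ref{lem:key} then delivers $T^{\ell}_\rho(\vec{x})\subseteq T^{\ell}_\rho(\vec{y})$; but this produces two elements of $\mathcal{T}$ indexed by distinct tuples with one contained in the other, contradicting the antichain hypothesis.

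The main delicate point is interpreting the antichain condition strongly enough for the converse to close: $\vec{x}\neq\vec{y}$ must preclude not just proper containment but also the equality $T^{\ell}_\rho(\vec{x})=T^{\ell}_\rho(\vec{y})$. This stronger reading is exactly what the forward direction delivers as a by-product (injectivity of the map $\vec{x}\mapsto T^{\ell}_\rho(\vec{x})$ on $\beta_\ell^\ell(\k)$), and it is the reading of ``antichain'' under which the two directions become symmetric and the final contradiction in the converse is available.
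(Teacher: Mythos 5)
Your argument is correct and follows essentially the same route as the paper: the forward direction is the paper's proof (Lemma~\ref{lem:key} plus hereditary $\ell$-rigidity force $F_{\vec{x}\to\vec{y}}\leq\id$, hence $\vec{x}=\vec{y}$), and your converse, via Theorem~\ref{cor:psi} and the identification $F_{\vec{x}\to\vec{y}}=f$, is the natural unpacking of the paper's one-line remark that it ``follows the same line.'' The delicate point you flag is genuine and not pedantic: if $\mathcal{T}$ is read merely as a set of subsets, the converse fails --- for $\rho=\k^h$ all the sets $T^{\ell}_\rho(\vec{x})$ coincide, so $\mathcal{T}$ is a singleton (trivially an antichain) and $\PAl\subseteq\pp^{(1)}\rho$, yet $\rho$ is not hereditarily $\ell$-rigid. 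The theorem must therefore be read as asserting that the indexed family $\left(T^{\ell}_\rho(\vec{x})\right)_{\vec{x}\in\beta_{\ell}^{\ell}(\k)}$ is an antichain, i.e.\ $\vec{x}\neq\vec{y}$ implies $T^{\ell}_\rho(\vec{x})\not\subseteq T^{\ell}_\rho(\vec{y})$; as you observe, the forward direction proves this stronger conclusion anyway, so the equivalence holds under that reading.
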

\begin{proof}  If $\PAl\subseteq \pp^{(1)} \rho$, then $\Psi_m(\k) \subseteq  \pp^{(1)}\r$ for all $m < \ell$. Now, if    $\rho$  is hereditarily $\ell$-rigid, then $\PAl= \pp^{(1)} \rho$. Thus the equivalence in Lemma \ref{lem:key} holds in the cases we are considering and gives the result. For example, let  $T^{\ell}_\rho(\vec{x})$ and $T^{\ell}_\rho(\vec{y})\in \mathcal T$. Suppose that $T^{\ell}_\rho(\vec{x}) \subseteq T^{\ell}_\rho(\vec{y})$. Applying Lemma \ref{lem:key} we get that the map  $F_{\vec{x}\to\vec{y}} \in \pp  \rho^{(1)}$. If  $\rho$ is hereditarily $\ell$-rigid, this map must be the identity, i.e.,  $x=y$ hence $T^{\ell}_\rho(\vec{x})=T^{\ell}_\rho(\vec{y})$. This proves that if $\rho$ is hereditarily $\ell$-rigid, then  $\mathcal T$ is an antichain. The converse follows the same line. \end{proof}

%\begin{corollary}
%  Let $\rho$ be an $h$-ary $\ell$-rigid relation on $\k$. Then
%  \[ |\beta_{\ell}^{\ell}(\k)| \leq \binom{|\beta_{\ell}^h([\ell])|}{|\beta_{\ell}^h([\ell])|/2}. \]
%\end{corollary}

Since   $T_{\rho}^{\ell}(\vec{x})$ is a subset of $\beta_{\ell}^{h}([\ell])$ whose cardinality is the number $s(h, \ell)$  of surjections of $[h]$ onto $[\ell]$, then with the help of Sperner's theorem we obtain:

\begin{corollary}
  Let $\rho$ be an $h$-ary hereditarily $\ell$-rigid relation on $\k$. Then $k^{\underline \ell} \leq {s(h,\ell)\choose s(h,\ell)/2}$ where $k^{\underline \ell}=k\cdot(k-1)\cdots (k-\ell+1)$ and $s(h, \ell)$ is  the number of surjections of $[h]$ onto $[\ell]$.

\end{corollary}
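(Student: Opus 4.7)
The plan is to combine Theorem~\ref{corollary:rigidisantichain} with Sperner's theorem, after a short cardinality count.

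First I would apply Theorem~\ref{corollary:rigidisantichain}: since $\rho$ is hereditarily $\ell$-rigid, $\PAl \subseteq \pp^{(1)}\rho$ holds, so the theorem yields that the family $\mathcal{T} := \{ T^{\ell}_\rho(\vec{x}) \mid \vec{x} \in \beta_{\ell}^{\ell}(\k) \}$ is an antichain with respect to set inclusion. By definition, each $T^{\ell}_\rho(\vec{x})$ is a subset of $\beta_{\ell}^h([\ell])$, and this ambient set has cardinality exactly $s(h,\ell)$ since it is identified with the set of surjections $[h] \twoheadrightarrow [\ell]$.

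Next I would verify that the parametrization $\vec{x} \mapsto T^{\ell}_\rho(\vec{x})$ is injective, so that $|\mathcal{T}| = |\beta_{\ell}^{\ell}(\k)| = k^{\underline{\ell}}$. Given $\vec{x}, \vec{y} \in \beta_{\ell}^{\ell}(\k)$ with $T^{\ell}_\rho(\vec{x}) = T^{\ell}_\rho(\vec{y})$, Lemma~\ref{lem:key} (whose hypothesis $\Psi_m(\k) \subseteq \pp^{(1)}\rho$ for $m < \ell$ is guaranteed by $\PAl \subseteq \pp^{(1)}\rho$) gives $F_{\vec{x}\to\vec{y}} \in \pp^{(1)}\rho = \PAl$. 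But $F_{\vec{x}\to\vec{y}}$ has both domain and image of size $\ell$, so membership in $\PAl$ forces $F_{\vec{x}\to\vec{y}} \leq \id$, i.e.\ $\vec{y}\circ\vec{x}^{-1}$ is the identity on $\image(\vec{x})$, which yields $\vec{x} = \vec{y}$. (This is essentially the argument already written out in the proof of Theorem~\ref{corollary:rigidisantichain}, so no new work is needed.) Hence $|\mathcal{T}| = k^{\underline{\ell}}$.

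Finally I would invoke Sperner's theorem on the $s(h,\ell)$-element ambient set $\beta_{\ell}^h([\ell])$: any antichain in its powerset has size at most $\binom{s(h,\ell)}{\lfloor s(h,\ell)/2 \rfloor}$. Combining with the previous step gives
\[ k^{\underline{\ell}} \;=\; |\mathcal{T}| \;\leq\; \binom{s(h,\ell)}{\lfloor s(h,\ell)/2 \rfloor}, \]
which is the stated inequality (the notation $\binom{s(h,\ell)}{s(h,\ell)/2}$ in the statement being read with the implicit floor when $s(h,\ell)$ is odd). There is no real obstacle here: everything follows by bookkeeping from the preceding theorems, with the only mildly non-obvious step being the injectivity of $\vec{x} \mapsto T^{\ell}_\rho(\vec{x})$, which is already implicit in the antichain argument.
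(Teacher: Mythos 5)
Your proof is correct and follows exactly the route the paper intends: Theorem~\ref{corollary:rigidisantichain} gives the antichain, the injectivity of $\vec{x}\mapsto T^{\ell}_\rho(\vec{x})$ (which the paper leaves implicit but which is exactly the $T^{\ell}_\rho(\vec{x})\subseteq T^{\ell}_\rho(\vec{y})\Rightarrow\vec{x}=\vec{y}$ argument already in that theorem's proof) gives $|\mathcal{T}|=k^{\underline{\ell}}$, and Sperner's theorem on the $s(h,\ell)$-element set $\beta_{\ell}^h([\ell])$ finishes it. Your explicit attention to the injectivity step and the floor in the binomial coefficient is a welcome tightening of what the paper states only in passing.
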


The aim of the next  section  is to show that for $\ell=2$, this bound can be attained, and that for $\ell > 2$
the actual upper bound is not much lower than the one in the previous corollary.

\section{The case $\ell=2$}

For $h\geq 2$, we have $s(h,2) = 2^h-2$, and thus $s(h,2)/2 = 2^{h-1}-1$ is odd. Hence,  there is just one antichain of maximal size on the set $\beta_{2}^{h}([2])$, 
namely ${\mathcal   X}_{2}^h := \binom{\beta_{2}^h([2])}{2^{h-1}-1}$. Therefore, for any $X \in {\mathcal   X}_2^h$ the dual set $X^{\mathrm{d}}$ (interchanging 0's and 1's) is different from $X$, but also belongs to ${\mathcal   X}_2^h$.
Let $T\colon  \beta_2^2(\k) \to {\powerset}({ \beta_2^h([2])) }$ be an injective function such that
$T(x,y) \in {\mathcal   X}_2^h$ and $T(y,x) = T(x,y)^{\mathrm{d}}$.
Then $T$ fulfills the condition  in Proposition \ref{proposition:compability}.  The relation $\rho_T$ given by Definition \ref{def:12} satisfies the condition in Theorem \ref{corollary:rigidisantichain}, 
and thus it is hereditarily $2$-rigid. From this observation we get:

\begin{theorem}
  There exists an $h$-ary hereditarily $2$-rigid relation on $\k$ if and only if
  \[ k\cdot(k-1) \leq \binom{2^h-2}{2^{h-1}-1}. \]
\end{theorem}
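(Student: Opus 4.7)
The plan is to treat the two implications separately. The forward direction is immediate from the (unlabelled) corollary at the end of the previous section: instantiating it at $\ell=2$ gives $s(h,2)=2^h-2$ and $k^{\underline{2}}=k(k-1)$, so any $h$-ary hereditarily $2$-rigid relation on $\k$ forces $k(k-1)\le \binom{2^h-2}{2^{h-1}-1}$. For the converse I would carry out the construction sketched in the paragraph preceding the statement. First, I would note that $X\mapsto X^{\mathrm d}$ is a fixed-point-free involution on $\mathcal{X}_2^h$: any self-dual $X\subseteq \beta_2^h([2])$ would be a union of the length-$2$ orbits of the swap action on $\beta_2^h([2])$ and hence of even cardinality, contradicting $|X|=2^{h-1}-1$. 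Thus $\mathcal{X}_2^h$ decomposes into $\binom{2^h-2}{2^{h-1}-1}/2$ unordered pairs $\{X,X^{\mathrm d}\}$, while $\beta_2^2(\k)$ decomposes into $k(k-1)/2$ unordered pairs $\{(x,y),(y,x)\}$.

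The hypothesis $k(k-1)\le\binom{2^h-2}{2^{h-1}-1}$ lets me injectively assign each $\{x,y\}\subseteq\k$ to some pair $\{X,X^{\mathrm d}\}$, and after fixing an ordering I set $T(x,y):=X$ and $T(y,x):=X^{\mathrm d}$. The resulting $T\colon \beta_2^2(\k)\to\powerset(\beta_2^h([2]))$ is injective, lands in $\mathcal{X}_2^h$, and satisfies $T(y,x)=T(x,y)^{\mathrm d}$, which is exactly the condition of Proposition \ref{proposition:compability} for the unique nontrivial permutation $\sigma$ of $[2]$. I would then form $\rho_T$ via Definition \ref{def:12} and invoke Theorem \ref{corollary:rigidisantichain}. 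The inclusion $\Omega_{<2}(\k)\subseteq\pp^{(1)}\rho_T$ is routine: partial subfunctions of the identity preserve every relation, and a partial constant sends every input tuple to a constant output tuple, which lies in $\beta^h_{<2}(\k)\subseteq \rho_T$.

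The core step is the identity $T^2_{\rho_T}(\vec{x})=T(\vec{x})$ for every $\vec{x}\in\beta_2^2(\k)$; once that is established, $\{T^2_{\rho_T}(\vec{x})\}_{\vec{x}}$ is a family of pairwise distinct elements of $\mathcal{X}_2^h$ (distinctness from injectivity of $T$), hence an antichain by Sperner, and Theorem \ref{corollary:rigidisantichain} concludes. The inclusion $T(\vec{x})\subseteq T^2_{\rho_T}(\vec{x})$ is immediate from Definition \ref{def:12}. For the reverse, suppose $\vec{x}\circ\vec{i}\in\rho_T$ with $\vec{i}\in\beta_2^h([2])$: since $\vec{x}\circ\vec{i}$ has image of size $2$ it does not lie in $\beta^h_{<2}(\k)$, so it equals $\vec{y}\circ\vec{j}$ for some $\vec{y}\in\beta_2^2(\k)$ and $\vec{j}\in T(\vec{y})$. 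Comparing images forces $\vec{y}\in\{\vec{x},\vec{x}\circ\sigma\}$; the first case gives $\vec{j}=\vec{i}\in T(\vec{x})$, while in the second $\vec{j}=\sigma\circ\vec{i}$, and the compatibility $T(\vec{x}\circ\sigma)=T(\vec{x})^{\mathrm d}$ (equivalently Proposition \ref{proposition:compability}) yields $\vec{i}\in T(\vec{x})$. This image-comparison, together with the duality built into $T$, is the main obstacle to navigate: without it the set-union defining $\rho_T$ could in principle introduce ``accidental'' surjective tuples over $\image(\vec{x})$ coming from some third $\vec{y}$.
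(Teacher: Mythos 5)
Your proposal is correct and follows essentially the same route as the paper: the necessity comes from the Sperner-based corollary at $\ell=2$, and the sufficiency from constructing an injective $T$ with values in $\mathcal{X}_2^h$ satisfying $T(y,x)=T(x,y)^{\mathrm d}$ and invoking Proposition~\ref{proposition:compability} and Theorem~\ref{corollary:rigidisantichain} on $\rho_T$. You merely make explicit several steps the paper leaves implicit (the fixed-point-freeness of duality, the counting that makes $T$ exist exactly under the stated inequality, and the verification that $T^2_{\rho_T}(\vec{x})=T(\vec{x})$), all of which check out.
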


For example, if $h:=1,2,3,4, 5$ the largest values  of $k$ are respectively: $0,2,5,59, 12455$.
Using an approximation of the binomial coefficient via Stirling's formula we get an  $h$-ary hereditarily $2$-rigid relation iff
\[ k \lesssim \sqrt{\frac{4^{2^{h-1}-1}}{\sqrt{\pi(2^{h-1}-1)}}} = \frac{2^{2^{h-1}-1}}{\sqrt[4]{\pi(2^{h-1}-1)}}, \]
which  basically grows double exponentially.

\section{The case ${\ell} \geq 3$}

Denote by $\mathfrak S_\ell$ the set of permutations of $[\ell]$. Let $\pi\in \mathfrak S_\ell$ and $X\subseteq \beta_{\ell}^{h}([\ell])$. We set $\pi(X):=\{\pi\circ \vec{i}: \vec{i}\in X\}$.
If  ${\ell} \geq 3$, a function $T$ constructed from the antichain ${\mathcal   X}_{\ell}^h$ similar to the case $\ell=2$,
does not automatically fulfill the condition in Proposition  \ref{proposition:compability},
because there can be some element $X \in {\mathcal   X}_{\ell}^h$, and some non-identical permutation $\pi$ on $[\ell]$,
with $\pi(X) = X$. The element $\vec{x} \in \beta_{\ell}^{\ell}(\k)$ that would be assigned to this element by $T$ would then
have the property that $F_{\vec{x}\to\vec{x}\circ \pi} \in \pp^{(1)}\rho_T$, i.e., $\rho_T$ would  not be hereditarily $\ell$-rigid.

For an arbitrary $\vec{y} \in \beta_{\ell}^h([\ell])$, let $Y := \{ \pi\circ \vec{y} \mid \pi \in \mathfrak S_\ell \}$ and $Y':= \{\{y\}: y\in Y\}$.
Then $|Y'| = {\ell}!$. Let ${\mathcal   Y}_{\ell}^h$ be an  antichain of maximal size in  $\powerset (\beta_{\ell}^h([\ell]) \setminus Y)$
and construct $T\colon  \beta_{\ell}^{\ell}(\k) \to {\powerset }({ \beta_{\ell}^h([\ell]) )}$ as follows.

Let $T_1\colon  \beta_{\ell}^{\ell}(\k) \to {\mathcal   Y}_{\ell}^h$ be an injective function
such that  $T_1(\vec{x}\circ \pi) = \pi^{-1} T_1(\vec{x})$.
Let $T_2\colon \beta_{\ell}^{\ell}(\k) \to Y'$ with
$T_2(\vec{x}\circ\pi) = \pi^{-1} T_2(\vec{x})$, and  set $T(\vec{x}) := T_1(\vec{x}) \cup   T_2(\vec{x})$.

This function $T$ fulfills the condition in Proposition \ref{proposition:compability}, and its range is an antichain. By Theorem \ref{corollary:rigidisantichain},
the relation $\rho_T$ is hereditarily $\ell$-rigid.

\begin{corollary}
  Let $k$, $\ell$, and $h$ with $\ell < h$ such that
  \[ k^{\underline{\ell} }\leq \binom{s(h,\ell)-\ell !}{(s(h,\ell)-\ell !)/2}. \]
  Then there is an $h$-ary hereditarily $\ell$-rigid relation on $\k$.
\end{corollary}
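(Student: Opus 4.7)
The plan is to apply Theorem \ref{corollary:rigidisantichain} to the relation $\rho_T$ from Definition \ref{def:12}, with $T$ built exactly as in the paragraph preceding the corollary. First I fix $\vec{y} \in \beta_\ell^h([\ell])$ and set $Y := \{\pi \circ \vec{y} \mid \pi \in \mathfrak S_\ell\}$, which has size $\ell!$ because $\pi \circ \vec{y} = \vec{y}$ forces $\pi$ to fix every point of $\image(\vec{y}) = [\ell]$. Let $Y' := \{\{y\} \mid y \in Y\}$. Sperner's theorem then provides an antichain $\mathcal Y_\ell^h \subseteq \powerset(\beta_\ell^h([\ell]) \setminus Y)$ of size $\binom{s(h,\ell) - \ell!}{\lfloor (s(h,\ell) - \ell!)/2 \rfloor}$, and by hypothesis this is at least $k^{\underline \ell}$.

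Next I build $T := T_1 \cup T_2$. The equivariant $T_2 \colon \beta_\ell^\ell(\k) \to Y'$ is straightforward: each source $\mathfrak S_\ell$-orbit has size $\ell!$ and can be mapped bijectively onto the free orbit $Y'$, and this extends across source orbits by $\mathfrak S_\ell$-equivariance. The injective, equivariant $T_1 \colon \beta_\ell^\ell(\k) \to \mathcal Y_\ell^h$ is then constructed by earmarking $\binom{k}{\ell}$ distinct $\mathfrak S_\ell$-orbits of full size $\ell!$ inside $\mathcal Y_\ell^h$ and assigning one to each source orbit. Setting $T(\vec{x}) := T_1(\vec{x}) \cup T_2(\vec{x})$, I verify three properties: (i) $T$ is injective, since $T_1(\vec{x}) \cap Y = \emptyset$ while $T_2(\vec{x}) \subseteq Y$, so one recovers $T_1(\vec{x}) = T(\vec{x}) \setminus Y$; (ii) the range of $T$ is an antichain in $\powerset(\beta_\ell^h([\ell]))$: if $T(\vec{x}) \subseteq T(\vec{x}')$, then intersecting with $\beta_\ell^h([\ell]) \setminus Y$ gives $T_1(\vec{x}) \subseteq T_1(\vec{x}')$, forcing $T_1(\vec{x}) = T_1(\vec{x}')$ by the antichain property of $\mathcal Y_\ell^h$, and hence $\vec{x} = \vec{x}'$ by injectivity of $T_1$; (iii) the equivariance condition of Proposition \ref{proposition:compability} holds for $T$, inherited component-wise from $T_1$ and $T_2$.

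Finally, the inclusion $\PAl \subseteq \pp^{(1)}\rho_T$ is automatic because Definition \ref{def:12} gives $\rho_T \supseteq \beta_{<\ell}^h(\k)$: any $g \in \PA^{(1)}$ with $|\image(g)| < \ell$ sends an $h$-tuple into $\beta_{<\ell}^h(\k) \subseteq \rho_T$, and every partial subfunction of $\id$ preserves every relation. Theorem \ref{corollary:rigidisantichain} then yields that $\rho_T$ is hereditarily $\ell$-rigid. The delicate point of the plan is the construction of $T_1$: one must guarantee that the maximal antichain $\mathcal Y_\ell^h$ contains at least $\binom{k}{\ell}$ distinct free $\mathfrak S_\ell$-orbits, equivalently $k^{\underline \ell}$ antichain elements with trivial $\mathfrak S_\ell$-stabilizer. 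This is where the main bookkeeping lies, and it can be handled by exploiting that the $\mathfrak S_\ell$-action on the base set $\beta_\ell^h([\ell]) \setminus Y$ is free, so that among the middle-level subsets one can select an $\mathfrak S_\ell$-invariant antichain whose members all have trivial stabilizer.
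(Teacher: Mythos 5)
Your proposal is the paper's own construction step for step---the free orbit $Y$, the singleton family $Y'$, the maximal antichain $\mathcal{Y}_\ell^h$ in $\powerset(\beta_\ell^h([\ell])\setminus Y)$, the equivariant $T=T_1\cup T_2$, and the concluding appeal to Proposition~\ref{proposition:compability} and Theorem~\ref{corollary:rigidisantichain}---and you are in fact more explicit than the paper about why $T$ is injective, why its range is an antichain, and why $\PAl\subseteq\pp^{(1)}\rho_T$. The single point you flag as delicate but do not settle (that the middle layer of $\powerset(\beta_\ell^h([\ell])\setminus Y)$ contains at least $k^{\underline{\ell}}$ sets with trivial $\mathfrak{S}_\ell$-stabilizer, so that an injective equivariant $T_1$ exists) is precisely the point the paper itself passes over in silence when it posits such a $T_1$, so your argument is no less complete than the published one.
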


This upper bound is not optimal, as the construction above is brute force. But there is only a constant (for constant $\ell$)
factor between this bound, and the one given before.

\begin{theorem}
  Let $r(\ell,h)$ be the maximum cardinality  of $h$-ary hereditarily $\ell$-rigid relations.
   Then
  \[ \binom{s(h,\ell)-\ell !}{(s(h,\ell)-\ell !)/2} \leq r(\ell,h) \leq \binom{s(h,\ell)}{(s(h,\ell))/2}. \]
  Furthermore, lower and upper bound on $r(\ell,h)$ differ approximately by a constant factor for constant $\ell$, and $h \gg \ell$:
   \begin{eqnarray*}
 \binom{s(h,\ell)-\ell !}{(s(h,\ell)-\ell !)/2} &\approx& \frac{2^{s(h,\ell)-\ell !}}{\sqrt{(\pi/2)(s(h,\ell)-\ell !)}}\\
 &\approx& \frac{1}{2^{\ell !}}\frac{2^{s(h,\ell)}}{\sqrt{(\pi/2)s(h,\ell)}} \\
 &\approx& \frac{1}{2^{\ell !}}\binom{s(h,\ell)}{(s(h,\ell))/2}.
   \end{eqnarray*}
\end{theorem}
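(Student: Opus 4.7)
The proof is essentially a synthesis of the two preceding results together with Stirling's formula; no fundamentally new construction is needed.

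For the upper bound $r(\ell,h) \leq \binom{s(h,\ell)}{s(h,\ell)/2}$, I would invoke the Sperner bound obtained via Theorem \ref{corollary:rigidisantichain}: for any hereditarily $\ell$-rigid $h$-ary $\rho$, the family $\mathcal{T} := \{T^{\ell}_\rho(\vec{x}) : \vec{x} \in \beta_{\ell}^{\ell}(\k)\}$ is an antichain inside the $s(h,\ell)$-element set $\beta_{\ell}^h([\ell])$, and Lemma \ref{lem:key} forces $\vec{x} \mapsto T^{\ell}_\rho(\vec{x})$ to be injective (otherwise some $F_{\vec{x}\to\vec{y}}$ would lie in $\pp^{(1)} \rho$ yet outside $\PAl$, contradicting hereditary rigidity), so the cardinality $k^{\underline{\ell}} = |\mathcal{T}|$ is bounded by the central binomial coefficient. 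The lower bound $r(\ell,h) \geq \binom{s(h,\ell)-\ell!}{(s(h,\ell)-\ell!)/2}$ is precisely the conclusion of the preceding corollary: for every $k$ with $k^{\underline{\ell}}$ below this binomial, the construction $T = T_1 \cup T_2$ built just above yields a $\rho_T$ that satisfies both the antichain condition and the symmetry of Proposition \ref{proposition:compability}, hence is hereditarily $\ell$-rigid.

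The asymptotic comparison then follows by a direct application of Stirling in the form $\binom{2n}{n} \sim 4^n/\sqrt{\pi n}$, so that $\binom{m}{m/2} \sim 2^m/\sqrt{(\pi/2) m}$. Substituting $m = s(h,\ell) - \ell!$ produces the first displayed approximation. For fixed $\ell$ and $h \gg \ell$, the explicit formula $s(h,\ell) = \sum_{j=1}^\ell (-1)^{\ell-j}\binom{\ell}{j}j^h$ shows $s(h,\ell) \sim \ell^h \to \infty$, so the additive shift $\ell!$ is negligible inside the square root (yielding $\sqrt{(\pi/2)(s(h,\ell)-\ell!)} \sim \sqrt{(\pi/2) s(h,\ell)}$) while contributing the clean multiplicative factor $2^{-\ell!}$ in the exponent, which gives the final two lines of the display.

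The only delicate bookkeeping concerns parity: when $s(h,\ell) - \ell!$ is odd, one interprets $(s(h,\ell)-\ell!)/2$ as a floor, which changes the binomial coefficient by a bounded factor and therefore does not disturb the leading-order asymptotics. Since the overall argument is a direct compilation of Theorem \ref{corollary:rigidisantichain}, the preceding corollary, and Stirling, the main (rather mild) obstacle is simply ensuring that the $2^{-\ell!}$ gap is genuinely the dominant source of discrepancy between the two bounds, with the square-root and parity corrections absorbed into the $\approx$ symbol.
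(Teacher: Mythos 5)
Your argument is correct and is essentially the paper's own (the paper states this theorem without a separate proof, intending it as a direct compilation of the Sperner upper bound from the antichain theorem, the lower bound from the $T=T_1\cup T_2$ construction and its corollary, and the Stirling estimate $\binom{m}{m/2}\sim 2^m/\sqrt{(\pi/2)\,m}$). Your explicit justification of the injectivity of $\vec{x}\mapsto T^{\ell}_\rho(\vec{x})$, the observation that $s(h,\ell)\sim\ell^h$ for fixed $\ell$, and the parity caveat are all sound and merely make explicit what the paper leaves implicit.
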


\section{Hereditarily strongly rigid relations}

 In this section we prove that no finite hereditarily strongly rigid  family of relations exists and we also construct an infinite   hereditarily  strongly rigid  family of relations.
 
 \begin{lemma} \label{phin}  Let $k\geq 2$, $n \ge 3$ and  $n>h\geq 1$. There is an $n$-ary partial function $\phi_n$ on $\bf k$ that is neither a partial  projection nor a partial  
 constant and that preserves all $h$-ary relations on $\k$.
 \end{lemma}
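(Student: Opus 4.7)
My approach is to construct $\phi_n$ explicitly on a domain of size $h+1$ in $\k^n$, using only two values from $\k$. I would begin by establishing the convenient reformulation: a partial $n$-ary function $f$ preserves every $h$-ary relation on $\k$ if and only if for every $h\times n$ matrix $M$ whose rows lie in $\dom(f)$, there exists a column index $j\in[n]$ with $f(M_{i*})=M_{ij}$ for every $i\in[h]$. The nontrivial direction is witnessed by taking $\rho$ to be the set of columns of $M$ itself.

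Writing $\dom(\phi_n)=\{\vec{a}_1,\dots,\vec{a}_{h+1}\}$ and $\alpha_t:=\phi_n(\vec{a}_t)$, I introduce the agreement sets $J_t:=\{j\in[n]\mid (\vec{a}_t)_j=\alpha_t\}$. In these terms, the reformulation reads: preservation of every $h$-ary relation is equivalent to $\bigcap_{t\in T}J_t\neq\emptyset$ for every nonempty $T\subseteq[h+1]$ with $|T|\leq h$; $\phi_n$ being a partial projection is equivalent to $\bigcap_{t=1}^{h+1}J_t\neq\emptyset$ (any $i$ in the intersection is a witness projection index); and $\phi_n$ being a partial constant is equivalent to all $\alpha_t$'s being equal. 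The task thus reduces to exhibiting a family of $h+1$ subsets of $[n]$ with the $h$-wise Helly property but empty total intersection, and realising it by tuples with not-all-equal values.

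For this I take the standard example $J_t=[h+1]\setminus\{t\}$, available since $n\geq h+1$: any $h$ of these subsets meet in the single omitted index, yet their global intersection is empty. To realize them I pick distinct $\alpha,\beta\in\k$ (using $k\geq 2$), set $\alpha_t=\alpha$ for $t\leq h$ and $\alpha_{h+1}=\beta$, and define: for $t\leq h$, $(\vec{a}_t)_j=\alpha$ on $[h+1]\setminus\{t\}$ and $(\vec{a}_t)_j=\beta$ on the remaining positions (namely $j=t$ and $j>h+1$); for $t=h+1$, $(\vec{a}_{h+1})_j=\beta$ on $[h]$ and $(\vec{a}_{h+1})_j=\alpha$ on $\{h+1,\dots,n\}$.

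The main point I expect to watch is the treatment of the free coordinates at positions $j>h+1$: they must be chosen so that no such index sneaks into every $J_t$ and quietly restores a common projection. With the assignment above, any $j>h+1$ is killed by $\vec{a}_1$, since $(\vec{a}_1)_j=\beta\neq\alpha=\alpha_1$, so the total intersection remains empty as designed. The remaining verifications are routine: the $h+1$ tuples are pairwise distinct by inspection within positions $[h+1]$ (and for the limiting case $h=1$ by looking at some position $j>h+1$, where the hypothesis $n\geq 3$ is crucial); coordinates $i\in[h]$ are ruled out as projection indices by $\vec{a}_i$, while coordinates $i\geq h+1$ are ruled out by $\vec{a}_{h+1}$; and $\phi_n$ takes both values $\alpha$ and $\beta$, so it is not a partial constant.
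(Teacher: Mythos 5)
Your proof is correct. The underlying mechanism is the same as the paper's -- build a partial function that restricts to a partial projection on every subset of its domain that an $h\times n$ matrix can see, yet admits no global projection witness -- but your realization is genuinely different. The paper takes a domain of $n$ specific $0$--$1$ tuples (one ``header'' row and $n-1$ near-unit rows), sends the header to $1$ and the rest to $0$, and argues directly that the value column is not a column of the domain matrix while deleting any row restores a projection. You instead isolate the combinatorics explicitly: the reformulation via the agreement sets $J_t$ reduces the whole lemma to exhibiting $h+1$ subsets of $[n]$ with nonempty $h$-wise intersections but empty total intersection, which you realize with the canonical family $[h+1]\setminus\{t\}$. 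This buys a smaller domain ($h+1$ points instead of $n$) and a cleaner separation between the clone-theoretic translation and the set-system construction, and your attention to the free coordinates $j>h+1$ (so that no stray index re-enters every $J_t$) and to pairwise distinctness when $h=1$ (where $n\ge 3$ is needed) covers exactly the spots where such a construction could silently fail. What the paper's version buys in exchange is a single function $\phi_n$ that works simultaneously for all $h<n$, which is what its subsequent corollary quietly uses; your $h$-dependent function still suffices there, since the $h$-wise Helly property implies the $h'$-wise one for all $h'\le h$, but that observation is worth making explicit if your construction were substituted in.
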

 
 \begin{proof}
 For $n \ge 3$ consider the $n$-ary partial function $\phi_n$ defined by

 $\d(\f_n):=\{(0,1,1,\dots,1),(0,1,0,\dots,0),$ \hfill

 \hfill $(0,0,1,0,\dots,0),\dots,(0,0,\dots,0,1)\}$

 and

 $\f_n(0,1,\dots,1)=1$ and
 $\f_n(\vv)=0$ for $\vv \in \d(\f_n)$, $\vv \ne (0,1,\dots,1)$ . Thus we have

  \[
 ~\f_n\left(
    \begin{array}{cccccc}
      0 &1 & 1 &1 \dots &  1\\
       0 & 1 & 0  & 0\dots  &0\\
         0 &0 & 1 & 0\dots  &0 \\
         0 & 0 & 0 & 1 \dots  & 0\\
         \vdots & \vdots  & \vdots & \vdots & \vdots \\
         0 & 0 & 0  & 0\dots  &1\\
         \end{array} \right)
         =
         \left(
    \begin{array}{c}
      1 \\
        0 \\
         0 \\
         0\\
         \vdots \\
         0\\
         \end{array} \right)
 \]

Call $M$ the above left hand side matrix. The partial function $\f_n$ takes two values and so is not a constant function. 
Since the tuple $(1,0,\dots,0)^t$ is not a column of the matrix $M$, the partial function $\f_n$ is not a partial projection.
However, if $g \le \f_n$ is a subfunction of $\f_n$ with  $\d(g) \ne \d(\f_n)$, then $g$ is a partial projection function. 
Indeed, it is easy to see that if for some $i = 1,\dots,n$, we have $M_{i*}\not\in \d(g)$, then $g={e_i^{n}}_{|\d(g)}$.

 Now let $\r$ be  an $h$-ary relation on $\k$ with $h < n$ and let $N$ be an $h \times n$ matrix with all columns
$N_{*j} \in \r$ and all  rows $N_{i*} \in \d(\f_n)$. Since $h < n$ the matrix $N$ does not contain all rows of the matrix $M$ defined above, and so the partial function $\f_n$ restricted to the rows of the matrix $N$ is a partial projection function; it therefore preserves the relation $\r$.
\end{proof}

 \begin{corollary}
Let $k \ge 2$ and ${\mathcal   F} = \{\r_1,\dots,\r_t\}$ be a finite family of relations over $\k$. Then there is a partial function $\f$ that is neither a partial projection nor a partial constant function such that $\f \in \displaystyle \bigcap_{i=1}^t \pp(\r_i).$
\end{corollary}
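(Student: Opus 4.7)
The plan is to reduce this corollary directly to Lemma \ref{phin}. The lemma already produces, for each $n \geq 3$ with $n > h$, an $n$-ary partial function $\phi_n$ that is neither a partial projection nor a partial constant and that preserves every $h$-ary relation on $\mathbf{k}$. Since the family $\mathcal{F}$ is finite, there is a uniform bound $h := \max\{\text{ar}(\rho_i) : 1 \leq i \leq t\}$ on the arities of its members.

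I would then simply pick $n := \max(h+1, 3)$, so that $n \geq 3$ and $n > \text{ar}(\rho_i)$ for every $i$. Applying Lemma \ref{phin} with this choice of $n$ yields a partial function $\phi_n$ that is neither a partial projection nor a partial constant. For each $i \in \{1, \dots, t\}$, the arity $h_i$ of $\rho_i$ satisfies $h_i \leq h < n$, so the lemma guarantees $\phi_n \in \mathrm{pPol}\,\rho_i$. Hence $\phi_n \in \bigcap_{i=1}^t \mathrm{pPol}\,\rho_i$, which is exactly what the corollary asks for.

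There is essentially no obstacle: all the work was done in Lemma \ref{phin}, whose conclusion is uniform in $h$ as long as $h < n$. The only thing to verify is that finiteness of $\mathcal{F}$ is used only to bound the arities (so that a single $n$ works for all $\rho_i$ simultaneously); for an infinite family with unbounded arities this argument breaks, which is precisely consistent with the fact that the paper constructs infinite hereditarily strongly rigid families elsewhere.
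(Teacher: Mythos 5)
Your proposal is correct and follows essentially the same route as the paper: both reduce the corollary to Lemma \ref{phin} by taking a single $n$ exceeding all the arities $h_i$ (the paper uses $n:=\max\{h_i\}+2$, you use $\max(h+1,3)$, which is an immaterial difference). The verification that $n\ge 3$ and $n>h_i$ for every $i$ is all that is needed, and your choice satisfies it.
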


\begin{proof}For $i=1,\dots,t$ let $h_i$ be the arity of the relation $\r_i$ and let
 $n:=\mathrm{max}\{h_i \st i=1,\dots,t\}+2$. Then the  $n$-ary function $\f_n$ constructed above preserves all relations $\r_i$.
\end{proof}

As a consequence of this, we get:

\begin{theorem}  Let $k \ge 2$.  Then there is no hereditarily strongly rigid finite family of relations  on $\k$.
\end{theorem}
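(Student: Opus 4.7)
The plan is to deduce the theorem as an immediate consequence of the preceding Corollary. Suppose, for contradiction, that $\mathcal F = \{\rho_1,\dots,\rho_t\}$ is a finite hereditarily strongly rigid family on $\k$. By definition, this means that $\bigcap_{i=1}^{t} \pp(\rho_i)$ coincides with the partial clone $C_0$ generated by all partial constant functions on $\k$.

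The first step is to describe $C_0$ explicitly. Substituting partial constants or partial projections into a projection yields again a partial projection or partial constant, while substituting anything into a partial constant yields a partial constant on the appropriate domain. Moreover, each $\pp(\rho_i)$ is a strong partial clone, so the intersection is strong, i.e., closed under taking subfunctions. Hence $C_0$ is exactly the set of partial projections together with the partial constant functions on $\k$; in particular, every element of $\bigcap_{i=1}^{t} \pp(\rho_i)$ must be either a partial projection or a partial constant.

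The second step is to invoke the preceding Corollary applied to the finite family $\mathcal F$: it produces a partial function $\phi \in \bigcap_{i=1}^{t} \pp(\rho_i)$ that is \emph{neither} a partial projection nor a partial constant. This contradicts the conclusion of the first step and completes the proof.

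The only point requiring some care is the identification of the partial clone generated by the partial constants as the set of partial projections together with partial constants. This is routine: it follows from the closure properties of composition noted above, combined with the strongness of the intersection $\bigcap_{i=1}^{t} \pp(\rho_i)$. Aside from that, the theorem is a one-line contradiction against the Corollary, and no further work is needed.
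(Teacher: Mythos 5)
Your proof is correct and follows exactly the paper's route: the theorem is derived as an immediate consequence of the preceding Corollary, the only additional observation being that the partial clone generated by the partial constants consists precisely of the partial projections and partial constants (which the paper records when defining hereditary strong rigidity). Your explicit justification of that identification is a welcome but routine addition; no gap remains.
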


\begin{remark}
Note that some relations that are strongly rigid with respect to total functions are studied and described in \cite{anne,lan-pos,benoit-claude,ivo}. 
\end{remark}

In view of the results above, one may ask if there exist a hereditarily strongly rigid infinite family of relations on $\k$, i.e., a family of relations ${\mathcal R}$ such that
$\bigcap_{\r \in {\mathcal R}} \pp(\r)$ is the partial clone generated by the constant functions on $\k$.

 In what follows we consider relations and partial clones on $\2:=\{0,1\}$. 
 The construction below can be generalized over any finite set.  
 Denote by ${\mathcal C}$ the strong partial clone generated by all partial constant functions on $\2$.

 \medskip
 
 {\bf Notation.}~ For $1 \le t < h $ define the $h$-tuple 
 $$\vv_t^h:=(\underbrace{1, \dots,1}_{{\rm t~times}},0,\dots,0)\in \2^h,$$
 $\Delta_t^h:= \2^h \setminus \{\vv_t^h\}$, and let ${\mathcal F}^{(h)}:= \{\Delta_1^h, \Delta_2^h,\dots,\Delta_{h-1}^h\}$.

Furthermore,  for $h \ge 2$, let
$\displaystyle \pp (\F^{(h)} ):= \bigcap_{j=1}^{h-1} \pp (\Delta_j^h)$.

 \medskip

 \begin{example}
  Consider $\F ^{(2)} = \{\{(0,0),(1,1),(1,0)\}\}$ and 
 $\F^{(3)}=\{\2^3\setminus \{(1,0,0)\},~\2^3 \setminus \{(1,1,0)\}\}$.
 \end{example}

 It is well known that polymorphisms of relations are strong partial clones, i.e., they contain all partial projections.  Moreover since $(0,\dots,0),(1,\dots,1) \in \Delta_t^h$ we deduce that $\pp(\Delta_t^h)$ contains all partial constant functions on $\2$. Thus for all $h \ge 2$ we have ${\mathcal C} \subseteq \pp(\F^{(h)})$.

 Note that if $\ww \in \2^h$ is any tuple with exactly $t$ symbols 1 and $h-t$ symbols 0, then $\pp(\Delta_t^h)= \pp (\2^h \setminus \{\ww\})$. 
 Indeed the relation $\Delta_t^h$ can be obtained from $\2^h \setminus \{\ww\}$ by some permutation of the variables of  $\2^h \setminus \{\ww\}$.

 \begin{example}
  Let $h=4$. Then $\vv_2^4 =(1,1,0,0)$ and 
  $\Delta_2^4 = \{0,1\}^4 \setminus \{(1,1,0,0)\}$.
 Take $\ww=(0,1,0,1)$ and $\lambda := \2^4 \setminus \{(0,1,0,1)\}$. It is easy to verify that
$$(x_1,x_2,x_3,x_4) \in \lambda \iff  (x_2,x_4,x_1,x_3) \in \Delta_2^4$$
 and thus $\pp(\Delta_2^4)= \pp (\lambda)$. Note that $\pp(\Delta_1^2)= \pp(\le)$ is a maximal partial clone on $\2$.
 \end{example}

 The following result  will be used to show that the family  $\pp (\F^{(h)})_{h \ge 2}$ is a descending chain of partial clones containing all partial constant functions.

 \begin{lemma}\label{lem:decreasing} Let $1 \le t < h$. Then
 
  \centerline{$\displaystyle  \bigcap_{j=1}^{h} \pp (\Delta_j^{h+1})
  \subseteq \pp(\Delta_t^h)$.}
 \end{lemma}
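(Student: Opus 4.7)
The plan is to argue by contradiction and transfer the failure of preserving $\Delta_t^h$ up to a failure of preserving $\Delta_{t+1}^{h+1}$, which is one of the relations hypothesized to be preserved (this is where the condition $t<h$, equivalently $t+1\leq h$, is used). So suppose $f\in \bigcap_{j=1}^h \pp(\Delta_j^{h+1})$ is an $n$-ary partial function that does not preserve $\Delta_t^h$. Then there is an $h\times n$ matrix $M$ whose columns lie in $\Delta_t^h$ and whose rows lie in $\dom(f)$, with image tuple equal to the unique excluded tuple $\vv_t^h$, i.e.\ $f(M_{i*})=1$ for $i\leq t$ and $f(M_{i*})=0$ for $i>t$.

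The key construction is to build an $(h+1)\times n$ matrix $M'$ from $M$ by duplicating row $t$ and inserting the copy immediately after it. Concretely, $M'_{i*}=M_{i*}$ for $i=1,\dots,t$, then $M'_{(t+1)*}=M_{t*}$, and $M'_{(j)*}=M_{(j-1)*}$ for $j=t+2,\dots,h+1$. Two things then need to be verified. First, every row of $M'$ is a row of $M$, hence lies in $\dom(f)$. Second, applying $f$ row-wise gives image $\vv_{t+1}^{h+1}$, since the only change to the image tuple is the insertion of an extra $1$ at position $t+1$.

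The step that deserves the most care, and which I expect to be the real crux, is checking that every column of $M'$ lies in $\Delta_{t+1}^{h+1}$. A column of $M'$ has the shape $c'=(c_1,\dots,c_t,c_t,c_{t+1},\dots,c_h)$ where $c=(c_1,\dots,c_h)$ is the corresponding column of $M$. For $c'$ to equal $\vv_{t+1}^{h+1}$ one needs $c_1=\cdots=c_t=1$ (positions $1,\dots,t+1$ of $c'$ all equal to $1$, noting that position $t+1$ of $c'$ is $c_t$), and $c_{t+1}=\cdots=c_h=0$. This is exactly the condition $c=\vv_t^h$, which is excluded by $c\in \Delta_t^h$. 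Hence $c'\in \Delta_{t+1}^{h+1}$.

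Once both checks are in place, the assumption $f\in\pp(\Delta_{t+1}^{h+1})$ forces the image of $M'$ to lie in $\Delta_{t+1}^{h+1}$, contradicting the computation that the image is $\vv_{t+1}^{h+1}$. This contradiction shows $f\in\pp(\Delta_t^h)$ and completes the argument. No other $\Delta_j^{h+1}$ besides $j=t+1$ is actually needed in the proof, which suggests the inclusion is somewhat loose; but as stated, it suffices to invoke just this one coordinate of the intersection.
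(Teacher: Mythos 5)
Your proof is correct and follows essentially the same strategy as the paper's: lift the $h\times n$ matrix witnessing a potential failure to an $(h+1)\times n$ matrix by duplicating one row, check that no column becomes the forbidden tuple, and invoke a single relation from the intersection. The only (immaterial) difference is that the paper duplicates the last row (a ``$0$'' row, since $t<h$) and uses $\Delta_t^{h+1}$, while you duplicate row $t$ (a ``$1$'' row) and use $\Delta_{t+1}^{h+1}$; your closing observation that only one $j$ is needed applies equally to the paper's version.
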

 \begin{proof}
  Let  $f \in \bigcap_{j=1}^{h} \pp (\Delta_j^{h+1})$ be an $n$-ary partial function on $\2$ and fix  $t \in \{1,\dots, h-1\}$.  
  We show that $$f \in \pp(\Delta_t^h).$$

 To do so take an $h \times n$ matrix $M$  with all columns in $\Delta_t^h$ and all rows in $\d(f)$.  Now let  $N$ be the $(h+1) \times n$ matrix whose rows satisfy
 $N_{i*} = M_{i*}$ for $i=1,\dots, h$ and $N_{(h+1)*}=M_{h*}$,  i.e., $N$ is obtained from $M$ by duplicating the last row of $M$. Since all columns of $M$ belong to $\Delta_t^h$, the $h$-tuple $\vv_t^h$ is not a column of $M$ and consequently the $(h+1)$-tuple $\vv_t^{h+1}$ is not a column of $N$. Thus all columns of $N$ belong to $\Delta_t^{h+1}$. As $f \in \pp (\Delta_t^{h+1})$ we have that the $(h+1)$-tuple $(f(N_{1*}),\ldots,f(N_{h*}),f(N_{h*})) \in \Delta_t^{h+1}$, i.e.,
 $(f(N_{1*}),\ldots,f(N_{h*}),f(N_{(h+1)*})) \ne \vv_t^{h+1}$ and so
$(f(M_{1*}),\ldots,f(M_{h*})) \ne \vv_t^h$, proving that $f \in \pp(\Delta_t^h)$.
 \end{proof}

 \begin{remark} 
The above result can be shown using the representation lemma due to B. Romov (see \cite{lau2006} Lemma 20.3.4). Indeed for $1<t<h$ we have
$$\Delta_t^h = \{ (x_1,\dots,x_h) \mid (x_1,\dots,x_h,x_h) \in \Delta_t^{h+1} \},$$
and so by  Lemma 20.3.4 of \cite{lau2006} we obtain  $\pp (\Delta_t^{h+1}) \subseteq \pp (\Delta_t^{h})$. Thus
$\bigcap_{j=1}^{h} \pp (\Delta_j^{h+1}) \subseteq \pp (\Delta_t^{h}).$
 \end{remark}

From  Lemma \ref {lem:decreasing}, we get:

\begin{corollary}\label{corollary}  Let $ h \ge 2$. Then
 $\pp(\F^{(h+1)}) \subseteq \pp(\F^{(h)})$. 
 \end{corollary}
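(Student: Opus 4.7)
The plan is to derive this as a direct consequence of Lemma \ref{lem:decreasing}, using only the definition $\pp(\F^{(h)}) = \bigcap_{j=1}^{h-1} \pp(\Delta_j^h)$.

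First I would unfold the left-hand side as $\pp(\F^{(h+1)}) = \bigcap_{j=1}^{h} \pp(\Delta_j^{h+1})$. Fix any index $t \in \{1, \dots, h-1\}$. Then $t$ satisfies $1 \le t < h$, so Lemma \ref{lem:decreasing} applies and yields
\[ \pp(\F^{(h+1)}) = \bigcap_{j=1}^{h} \pp(\Delta_j^{h+1}) \subseteq \pp(\Delta_t^h). \]

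Since this inclusion holds for every $t \in \{1, \dots, h-1\}$, I would then intersect over all such $t$ to obtain
\[ \pp(\F^{(h+1)}) \subseteq \bigcap_{t=1}^{h-1} \pp(\Delta_t^h) = \pp(\F^{(h)}), \]
which is the desired conclusion.

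There is no real obstacle here: the corollary is essentially a bookkeeping consequence of the lemma, where the only point to notice is that the indexing range $1 \le t \le h-1$ in the definition of $\F^{(h)}$ matches exactly the hypothesis $1 \le t < h$ required by Lemma \ref{lem:decreasing}. The substantive content (the row-duplication argument) is already carried out in the proof of the lemma, so the corollary reduces to applying it uniformly and intersecting.
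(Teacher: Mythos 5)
Your proof is correct and matches the paper's intent exactly: the paper derives the corollary directly from Lemma \ref{lem:decreasing} (stating only ``From Lemma \ref{lem:decreasing}, we get:''), and your argument of applying the lemma for each $t\in\{1,\dots,h-1\}$ and then intersecting is precisely the implicit bookkeeping step. Nothing is missing.
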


  The above inclusion is strict. Indeed consider the partial function $\f_n$ defined in Lemma \ref{phin} where $n:=h+1$. Then $\f_n$ preserves  
  $\Delta_t^h$ for all $t=1,\dots, h-1$ but $\f_n$ does not preserve $\Delta_1^{h+1}$.  

  \begin{corollary}
  $\pp(\F^{(2)}) \supset \pp(\F^{(3)}) \supset \pp(\F^{(4)}) \dots$. 
  \end{corollary}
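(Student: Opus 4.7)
The weak (non-strict) containments $\pp(\F^{(h+1)}) \subseteq \pp(\F^{(h)})$ for every $h \geq 2$ have already been recorded in the preceding corollary, so the only thing to prove is that each containment is strict. The plan is to produce, for every $h \geq 2$, an explicit partial function lying in $\pp(\F^{(h)}) \setminus \pp(\F^{(h+1)})$, and the paragraph between the previous corollary and this one already hints at the right candidate: take the partial function $\phi_n$ of Lemma \ref{phin} with $n := h+1$.

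The first half is essentially free. Lemma \ref{phin}, applied with $n=h+1$, guarantees that $\phi_{h+1}$ preserves every $h'$-ary relation on $\2$ with $h' < h+1$; in particular it preserves each $\Delta_t^h$ for $t=1,\dots,h-1$, so $\phi_{h+1} \in \pp(\F^{(h)})$. No further work is needed here beyond invoking the lemma.

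The second half is the substantive step: I need to show $\phi_{h+1} \notin \pp(\Delta_1^{h+1})$, which immediately gives $\phi_{h+1} \notin \pp(\F^{(h+1)})$. For this I reuse the matrix $M$ displayed in the proof of Lemma \ref{phin}, now viewed as an $(h+1)\times(h+1)$ matrix whose rows are exactly the elements of $\dom(\phi_{h+1})$. A direct inspection of $M$ shows that its first column is $(0,0,\dots,0)^t$ and, for $j\geq 2$, the $j$-th column has a $1$ in row $1$ and in row $j$ and $0$'s elsewhere; in particular no column equals $\vv_1^{h+1}=(1,0,\dots,0)$, so every column of $M$ belongs to $\Delta_1^{h+1}$. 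On the other hand, applying $\phi_{h+1}$ row by row gives $(1,0,0,\dots,0) = \vv_1^{h+1}$, which is precisely the tuple excluded from $\Delta_1^{h+1}$. Hence $\phi_{h+1}$ fails to preserve $\Delta_1^{h+1}$.

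Combining the two halves with the previous corollary yields strictness at each level, and the claimed infinite strictly descending chain follows. I do not expect any genuine obstacle: the whole proof boils down to (i) quoting Lemma \ref{phin} for the ``$\subseteq$'' side, and (ii) a column-by-column check on the very matrix already used in that lemma for the ``$\neq$'' side. The only point that requires slight care is to make sure the witness matrix has all columns in $\Delta_1^{h+1}$ (and not merely in some $\Delta_t^{h+1}$), which is exactly why $\Delta_1$ rather than an arbitrary $\Delta_t$ is the natural relation to separate against.
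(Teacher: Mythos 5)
Your proposal is correct and follows exactly the paper's own route: the paper also separates $\pp(\F^{(h)})$ from $\pp(\F^{(h+1)})$ by taking $\phi_n$ from Lemma \ref{phin} with $n:=h+1$, noting it preserves every $\Delta_t^h$ but fails to preserve $\Delta_1^{h+1}$. Your explicit column-by-column verification on the matrix $M$ merely fills in a detail the paper leaves to the reader, and it checks out.
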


 We now show that the limit of the above chain is the partial clone generated by the constant functions on $\2$.

 \begin{lemma}\label{lastresult}
 Let $n \ge 2$ and $f$ be an $n$-ary partial function that is neither a partial projection nor a partial constant function on $\2$. Then there is an $h \ge 2$ such that $f \not\in \pp(\F^{(h)})$.
 \end{lemma}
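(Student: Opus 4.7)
The plan is to find, for some $h$, a specific index $t \in \{1,\dots,h-1\}$ and an $h \times n$ matrix $M$ witnessing that $f$ fails to preserve $\Delta_t^h$. Concretely, this means producing rows $M_{1*},\dots,M_{h*}\in \dom(f)$ whose columns all avoid $\vv_t^h$, yet $\bigl(f(M_{1*}),\dots,f(M_{h*})\bigr)$ equals $\vv_t^h$; the candidate I will use is $h=n+2$.

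First I would exploit the two hypotheses on $f$ to extract a useful combinatorial structure on $\dom(f)$. Set $A:=\{\vec{a}\in\dom(f):f(\vec{a})=1\}$ and $B:=\{\vec{b}\in\dom(f):f(\vec{b})=0\}$. Since $f$ is not a partial constant, both $A$ and $B$ are nonempty. Since $f$ is not a partial projection, for each coordinate $i\in\{1,\dots,n\}$ there exists some $\vec{x}^{(i)}\in\dom(f)$ with $f(\vec{x}^{(i)})\ne x^{(i)}_i$. Splitting by the value of $f(\vec{x}^{(i)})$ gives: either there is $\vec{a}\in A$ with $a_i=0$ (put $i\in T$), or there is $\vec{b}\in B$ with $b_i=1$ (put $i\in S$). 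In every case $S\cup T=\{1,\dots,n\}$.

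Next I would assemble the matrix. Take $h:=n+2$ and $t:=1+|T|$, so that $1\le t\le h-1$. For each $i\in T$ fix a witness $\vec{a}^{(i)}\in A$ with $a^{(i)}_i=0$, and for each $i\in[n]\setminus T\subseteq S$ fix a witness $\vec{b}^{(i)}\in B$ with $b^{(i)}_i=1$. Pick any anchors $\vec{a}^{*}\in A$ and $\vec{b}^{*}\in B$, and let $M$ have as its top $t$ rows the tuple $\vec{a}^{*}$ together with the $\vec{a}^{(i)}$ for $i\in T$, and as its bottom $h-t$ rows the tuple $\vec{b}^{*}$ together with the $\vec{b}^{(i)}$ for $i\in[n]\setminus T$.

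Finally I would verify the two required properties and conclude. Because the top rows all lie in $A$ and the bottom rows all lie in $B$, applying $f$ row by row gives exactly $\vv_t^h$. For each column $i$: if $i\in T$ then the row $\vec{a}^{(i)}$ sits in the top block and puts a $0$ in the top of column $i$, so that column differs from $\vv_t^h$; if $i\notin T$ then $\vec{b}^{(i)}$ sits in the bottom block and puts a $1$ there, again making the column differ from $\vv_t^h$. Hence all columns belong to $\Delta_t^h$ while the output is $\vv_t^h\notin \Delta_t^h$, so $f\notin\pp(\Delta_t^h)\supseteq\pp(\mathcal{F}^{(h)})$. The main obstacle is simply engineering the top/bottom split so that the non-projection witnesses can coexist with the correct output pattern; the observation that adding a single "anchor" row to each block lets us realise any value of $t\in\{1+|T|\}$ while still covering all $n$ coordinates is what makes the construction go through, and it is what forces the bound $h=n+2$.
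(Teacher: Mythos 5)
Your proof is correct, and at bottom it uses the same mechanism as the paper's: choose $t$ to be the number of $1$'s that $f$ outputs on a suitable list of domain tuples, stack those tuples as the rows of an $h\times n$ matrix so that the output vector is exactly $\vv_t^h$, and then argue that no column can equal $\vv_t^h$, so that every column lies in $\Delta_t^h$ while the image of the rows does not. The difference lies entirely in which rows you take. The paper takes \emph{all} of $\dom (f)$ as rows, so $h=|\dom (f)|$ (which can be as large as $2^n$), and the column condition follows from a single global observation: a column equal to $\vv_t^h$ would mean $f(\vec{x})=x_i$ for every $\vec{x}\in\dom(f)$, i.e.\ $f$ would be a partial projection. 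You instead pick one non-projection witness per coordinate plus two anchor rows, obtaining the fixed arity $h=n+2$ and checking the column condition coordinate by coordinate. Your bookkeeping is sound: $t=1+|T|$ and $h-t=n+1-|T|$ both lie between $1$ and $h-1$, the preservation condition places no distinctness requirement on rows (so possible coincidences among your witnesses and anchors are harmless), and each column $i$ is spoiled in the correct block by the witness for coordinate $i$. What your variant buys is a much smaller, domain-independent arity $h=n+2$ in place of $|\dom(f)|$; what the paper's variant buys is brevity, since the one-line global non-projection argument replaces your per-coordinate case analysis.
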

 \begin{proof}
  Set $h:=|\d f |$ and form an $h \times n$ matrix $M$ whose rows consists of all tuples in the domain of $f$ and consider the $h$-tuple $f(M):= (f(M_{1*}), \dots, f(M_{h*}))$. 
  Since $f$ is not a partial constant function, the tuple $f(M)$ contains at least one symbol 0 and one symbol 1. Let $t$ be the number of symbols 1 in the tuple $f(M)$. 
  Call $N$ the $h \times n$ matrix obtained by rearranging the rows of the matrix $M$ so that  $f(N):= (f(N_{1*}), \dots, f(N_{h*}))=\vv_t^h$. 
  Since $f$ is not a partial projection function, no column of $N$ is the tuple $\vv_t^h$. Thus all columns of $N$ belong to the $h$-ary relation $\Delta_t^h$, 
  while  $(f(N_{1*}), \dots, f(N_{h*}))\not\in \Delta_t^h$, proving that  $f \not\in \pp(\Delta_t^h)$. Thus $f \not\in \pp(\F^{(h)})$. 
 \end{proof}

\medskip
 By combining  these results, we get:\\
 \begin{theorem} $\displaystyle {\mathcal C} = \bigcap_{ h \ge 2}\pp(\F^{(h)})$.
  \end{theorem}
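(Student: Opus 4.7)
The plan is to prove the two inclusions separately, the first being essentially bookkeeping and the second a direct consequence of Lemma \ref{lastresult} together with a small unary check. For $\mathcal{C} \subseteq \bigcap_{h \geq 2} \pp(\F^{(h)})$ I would argue as already indicated in the text: each $\Delta_t^h$ contains both constant tuples $(0,\dots,0)$ and $(1,\dots,1)$, so every partial constant function lies in $\pp(\Delta_t^h)$; since $\pp(\Delta_t^h)$ is a strong partial clone, it also contains every partial projection. Hence each $\pp(\F^{(h)})$ contains the strong partial clone $\mathcal{C}$, and so does the intersection.

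For the reverse inclusion, I would take an arbitrary $n$-ary partial function $f \in \bigcap_{h \geq 2} \pp(\F^{(h)})$ and split into two cases. If $n \geq 2$, the contrapositive of Lemma \ref{lastresult} immediately gives that $f$ is either a partial projection or a partial constant function, and therefore $f \in \mathcal{C}$.

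The case $n = 1$ is not formally covered by the lemma, so I would finish with a direct enumeration: the unary partial functions on $\2$ are the empty function, the four singleton-domain functions (each a subfunction of a projection or of a constant, hence in $\mathcal{C}$), the two total constants, the identity, and the negation $\neg \colon x \mapsto 1 - x$. Only $\neg$ is neither a partial projection nor a partial constant, and it fails to preserve $\Delta_1^2 = \{(0,0),(0,1),(1,1)\}$, because applying $\neg$ componentwise to the column $(0,1) \in \Delta_1^2$ produces $(1,0) \notin \Delta_1^2$. So $\neg \notin \pp(\F^{(2)})$ and the intersection cannot contain it.

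The main obstacle has already been absorbed by Lemma \ref{lastresult}, and once that is in hand the theorem is really just a combination of earlier observations; the only care needed is the tiny unary bookkeeping, which the lemma's hypothesis $n \geq 2$ technically leaves out.
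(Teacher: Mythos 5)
Your proof is correct and follows essentially the same route as the paper, which simply combines the observation that each $\Delta_t^h$ contains both constant tuples (so $\mathcal{C}\subseteq\pp(\F^{(h)})$ for every $h$) with Lemma \ref{lastresult} for the reverse inclusion. Your explicit treatment of the unary case, which the hypothesis $n\ge 2$ of Lemma \ref{lastresult} formally omits, is a small but legitimate completion that the paper leaves implicit.
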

  
 The above theorem shows that the family $\bigcup\{\F^{(h)} \st {h \ge 2}\}$ is a hereditarily strongly rigid family over $\2$.

\begin{remark}
 It is not hard to construct infinite subfamilies of $\{\F^{(h)} \st {h \ge 2}\}$  of relations that are hereditarily strongly rigid. For example consider the family of relations
$\{ \Delta_{n}^{2n} \mid n \in \N \}$. Then by the Romov representation lemma we have $\pp (\Delta_1^2) \supseteq \pp(\Delta_2^4) \supseteq \pp(\Delta_3^6) \dots$ Moreover, similar arguments as in Corollary \ref{corollary} and  Lemma \ref{lastresult} give that the above inclusions are all strict and
$\displaystyle {\mathcal C} = \bigcap_{ n \ge 2}\pp(\Delta_{n}^{2n}).$
\end{remark}

\section*{Acknowledgments} {This research was completed  while the second  author was visiting the Camille Jordan Institute in Lyon  in June  2014}. The authors wish to thank the reviewers of this paper, in particular an anonymous reviewer for a very detailed reading, pointing some flaws and making numerous useful suggestions.

 \end{document}